\newcommand{\eps}{\varepsilon}
\newcommand{\N}{\mathbb{N}}
\newcommand{\R}{\mathbb{R}}
\newcommand{\dd}{\mathrm{d}}
\newcommand{\FF}{\mathcal{F}}
\newcommand{\D}{\mathbb{D}}
\newcommand{\B}{\mathbb{B}}
\newcommand{\Hb}{\mathbb{H}}
\newcommand{\U}{\mathbb{U}}
\newcommand{\V}{\mathbb{V}}
\newcommand{\W}{\mathbb{W}}
\newcommand{\K}{\mathbb{K}}
\newcommand{\Lb}{\mathbb{L}}
\newcommand{\Z}{\mathbb{Z}}
\newcommand{\Ex}{\mathrm{E}}
\newcommand{\Var}{\mathrm{Var}}
\newcommand{\Cov}{\mathrm{Cov}}
\newcommand{\MSE}{\mathrm{MSE}}
\newcommand{\1}{\mathbf{1}}
\newcommand{\ip}[1]{\lfloor #1 \rfloor}
\renewcommand{\vec}{\bm}
\renewcommand{\Pr}{\mathrm{P}}
\newcommand{\p}{\overset{\Pr}{\to}}
\newcommand{\as}{\overset{\mathrm{a.s.}}{\longrightarrow}}
\theoremstyle{plain}
\newtheorem{prop}{Proposition}[section]
\newtheorem{defn}[prop]{Definition}
\newtheorem{lem}[prop]{Lemma}
\numberwithin{equation}{section}
\title{Dependent multiplier bootstraps for non-degenerate $U$-statistics under mixing conditions with applications} 
\author{Axel B\"ucher\,\footnote{Ruhr-Universit\"at Bochum,
Fakult\"at f\"ur Mathematik, 
Universit\"atsstr.~150, 44780 Bochum, Germany. 
{E-mail:} \texttt{axel.buecher@rub.de}}\;~ and Ivan Kojadinovic\,\footnote{Universit\'e de Pau et des Pays de l'Adour, Laboratoire de math\'ematiques et applications, UMR CNRS 5142, B.P. 1155, 64013 Pau Cedex, France.
{E-mail:} \texttt{ivan.kojadinovic@univ-pau.fr}.}
}
\begin{document}
\maketitle

\begin{abstract}
The asymptotic validity of a resampling method for two sequential processes constructed from non-degenerate $U$-statistics is established under mixing conditions. The resampling schemes, referred to as {\em dependent multiplier bootstraps}, result from an adaptation of the seminal approach of \cite{GomHor02} to mixing sequences. The proofs exploit recent results of \cite{DehWen10b} on degenerate $U$-statistics. A data-driven procedure for estimating a key bandwidth parameter involved in the resampling schemes is also suggested, making the use of the studied dependent multiplier bootstraps fully automatic. The derived results are applied to the construction of confidence intervals and to test for change-point detection. For such applications, Monte Carlo experiments suggest that the use of the proposed resampling approaches can have advantages over that of estimated asymptotic distributions. 

\medskip

\noindent {\it Keywords:} alpha and beta mixing; change-point detection; functional multiplier central limit theorem; lag window estimator; sequential processes.

\end{abstract}


\section{Introduction}

The asymptotic analysis of many well-known estimators and tests can be carried out using the theory of $U$-statistics. Common examples of estimators are the empirical variance, Gini's mean difference or Kendall's rank correlation coefficient, while a classical test based on a $U$-statistic is Wilcoxon's signed rank test for the hypothesis of location at zero (see, e.g., \citealp{van98}, Example 12.4).
Throughout this work, we focus on the important special case of $U$-statistics of order 2 based on stationary, short-range dependent observations. More precisely,  
let $d \geq 1$ be an integer and let $h: \R^d \times \R^d \to \R$ be a symmetric, measurable function. Given a stretch of observations $\vec X_1,\dots,\vec X_n$ drawn from a stationary, $\R^d$-valued sequence $(\vec X_i)_{i \in \N}$, 
\begin{equation}
\label{eq:Uh1n}
U_{h,1:n} = \frac{1}{\binom{n}{2}} \sum_{1 \leq i < j \leq n} h(\vec X_i,\vec X_j)
\end{equation}
is called {\em $U$-statistic of order 2 with kernel $h$}.

To analyze the asymptotics of such $U$-statistics, \cite{Hoe48b} introduced the decomposition
\begin{equation}
\label{eq:Hoeff_decomp}
U_{h,1:n} = \theta + \frac{2}{n} \sum_{i=1}^n h_1(\vec X_i) + U_{h_2,1:n},
\end{equation}
where, with $\vec X$ and $\vec X'$ denoting independent random vectors that have the same distribution as~$\vec X_1$,
\begin{align}
\label{eq:theta}
\theta &= \Ex\{h(\vec X, \vec X' ) \}, \\
\label{eq:h1}
h_1(\vec x) &= \Ex\{ h(\vec x, \vec X) \} - \theta, \qquad \vec x \in \R^d, \\
\label{eq:h2}
h_2(\vec x_1,\vec x_2) &= h(\vec x_1,\vec x_2) - h_1(\vec x_1) - h_1(\vec x_2) - \theta, \qquad \vec x_1, \vec x_2 \in \R^d,
\end{align}
provided all integrals exist. A simple calculation shows that $h_2$ is a {\em degenerate} kernel in the sense that $\Ex\{h_2(\vec x,\vec X)\} = 0$ for all $\vec x \in \R^d$. If $\Var\{ h_1(\vec X)\}=0$,  it follows from~\eqref{eq:Hoeff_decomp} that the asymptototic behavior of $U_{h,1:n}$ is determined by that of $U_{h_2,1:n}$, whence $U_{h,1:n}$ is called a \textit{degenerate $U$-statistic}. 
In the rest of this work, we restrict our attention to $U$-statistics such that $\Var\{ h_1(\vec X)\} > 0$ and refer to them as {\em non-degenerate $U$-statistics}. 

For any $x \geq 0$, let $\ip{x}$ be the greatest integer smaller or equal than $x$, and, for any $(s,t) \in \Delta= \{ (s,t) \in [0,1]^2: s \le t \}$, let $\lambda_n(s,t) = (\ip{nt}-\ip{ns} )/ n$. Also, let $\ell^\infty([0,1])$ be the space of all bounded real-valued functions on $[0,1]$ equipped with the uniform metric. The main theoretical aim of this work is to establish, under suitable moment and mixing conditions, the asymptotic validity of two {\em dependent multiplier bootstraps} for the stochastic process $\U_n \in \ell^\infty([0,1])$ defined by
\begin{equation}
\label{eq:Un}
\U_n(s) = 
\sqrt{n} \lambda_n(0,s) ( U_{h, 1:\ip{ns}} - \theta ) \qquad
\mbox{if } s \in [2/n,1],
\end{equation}
and $\U_n(s) = 0$ otherwise, and for the related process $\D_n \in \ell^\infty([0,1])$ defined by
\begin{equation}
\label{eq:Dn}
\D_n (s)  = 
 \sqrt n \lambda_n(0,s) \lambda_n(s,1) ( U_{h, 1:\ip{ns} } - U_{h, \ip{ns}+1:n } )  \qquad \mbox{if } s \in [2/n,1-2/n],
\end{equation}
and $\D_n(s)=0$ otherwise. The latter process is of particular importance for change point analysis; see Section~\ref{subsec:cp} below. 

Multiplier bootstraps, also frequently referred to as {\em wild} or {\em weighted} bootstraps, were used in a wide variety of settings. For the arithmetic mean, such resampling schemes were investigated among others by \cite{BarBer86} for independent observations and by \cite{Sha10} for weakly dependent observations. The latter author in particular showed that the {\em dependent} multiplier bootstrap shares the same favorable asymptotic properties as the {\em tapered block bootstrap} of \cite{PapPol01}: The mean squared error of the corresponding variance estimator can be of order $O(n^{-4/5})$, which compares favorably to the best rate of $O(n^{-2/3})$ achieved by all other time series bootstraps such as the {\em block bootstrap} of \cite{Kun89}, the {\em circular bootstrap} of \cite{PolRom92} or the {\em stationary bootstrap} of \cite{PolRom94}. Another advantage of multiplier bootstraps is that they can often be implemented in a computationally efficient way \citep[see, e.g.,][or Section~\ref{sec:confint_theta} below]{KojYanHol11}. For general empirical processes based on independent observations, key theoretical results on the multiplier bootstrap are given in \citet[Chapters 2.9 and 3.6]{vanWel96}, while for standard empirical processes based on weakly dependent observations, a seminal contribution is \citet[Section 3.3]{Buh93} which was recently revisited by \cite{BucKoj14}. Mutiplier bootstraps for degenerate $U$-statistics were for instance studied by \cite{DehMik94} in the case of independent observations and recently by \cite{LeuNeu13} in the case of weakly dependent data. The case of non-degenerate $U$-statistics based on independent observations was investigated by \cite{Jan94} and \cite{WanJin04}, among others. 

For non-degenerate $U$-statistics based on weakly dependent observations, the only study of the asymptotic validity of a resampling scheme seems to be due to \cite{DehWen10a} who investigated a {\em circular block bootstrap} for the statistic $\U_n(1)$ with $\U_n$ defined in~\eqref{eq:Un}. The dependent multiplier bootstraps for the process $\U_n$ proposed in Section~\ref{subsec:unboot} of this work are thus (sequentially extended) alternatives to the latter approach. Our proofs of their asymptotic validity exploit recent key results due to \cite{DehWen10a} and \cite{DehWen10b} concerning the degenerate $U$-statistic $U_{h_2,1;n}$ appearing in Hoeffding's decomposition~\eqref{eq:Hoeff_decomp}. 

We apply similar arguments to prove the asymptotic validity of related dependent multiplier bootstraps for the process $\D_n$ in~\eqref{eq:Dn}. The latter process is a key ingredient in a large class of tests for change-point detection \citep[see, e.g.,][]{GomHor02,HorHus05}, a typical test statistic being
\begin{equation}
\label{eq:Sn}
S_n = \max_{2 \leq k \leq n-2} | \D_n (k/n) | = \sup_{s \in [0,1]} | \D_n(s) |.
\end{equation}
Particular choices for the kernel $h$ lead to, for instance, tests for detecting changes in the variance, Gini's mean difference or Kendall's tau \citep[see][and references therein for more details on this last test]{DehVogWenWie14}. The dependent multiplier bootstraps for $\D_n$ investigated in this work can actually be regarded as an extension of the seminal multiplier bootstrap results of \cite{GomHor02} from independent to weakly dependent observations. As a consequence of this extension, under suitable moment and mixing conditions, the tests for change-point dectection based on $S_n$ could be carried out using resampling instead of relying on the fact that, under the null, $\D_n$ converges weakly to a Brownian bridge depending on an unknown long-run variance parameter that needs to be estimated. Our Monte Carlo experiments indicate that the use of resampling instead of the estimated asymptotic null distribution of $S_n$ can lead to better behaved tests. 

The remaining parts of this article are organized as follows. Dependent multiplier bootstrap results for the process $\U_n$ defined in~\eqref{eq:Un} are given in Section~\ref{sec:unboot}. In addition to asymptotic validity results, a procedure for estimating a key bandwidth parameter (playing a role somehow analoguous to the {\em block length} in the {\em block bootstrap}) is proposed. A straightforward application to the construction of confidence intervals concludes the section and illustrates possible advantages of the use of the proposed resampling schemes. Section~\ref{sec:multDn} provides asymptotic validity results for the related bootstrap procedures for the process $\D_n$ defined in~\eqref{eq:Dn}, and discusses applications to change-point detection. Monte Carlo experiments are carried out for a specific test for change-point detection based on the statistic $S_n$ in~\eqref{eq:Sn} and suggest that, in this case, the use of resampling may be preferable to that of the estimated asymptotic null distribution when computing an approximate p-value for~$S_n$.  Section~\ref{sec:con} concludes. 

All proofs are deferred to a sequence of appendices and the studied tests for change-point detection are implemented in the package {\tt npcp} for the \textsf{R} statistical system \citep{Rsystem}.

\section{Two dependent multiplier bootstraps for $\U_n$}  \label{sec:unboot}

\subsection{Additional definitions}

For the sake of completeness, let us first recall the notions of {\em strongly mixing sequence} and {\em absolutely regular sequence}. For a sequence of $d$-dimensional random vectors $(\vec Y_i)_{i \in \N}$, the $\sigma$-field generated by $(\vec Y_i)_{a \leq i \leq b}$, $a, b \in \N \cup \{+\infty \}$, is denoted by $\FF_a^b$. The strong mixing coefficients corresponding to the sequence $(\vec Y_i)_{i \in \N}$ are then defined by $\alpha_0 = 1/2$ and
$$
\alpha_r = \sup_{p \in \N} \sup_{A \in \FF_{0}^p,B\in \FF_{p+r}^{+\infty}} | \Pr(A \cap B) - \Pr(A) \Pr(B) |, \qquad r \in \N, r > 0.
$$
The sequence $(\vec Y_i)_{i \in \N}$ is said to be {\em strongly mixing} if $\alpha_r \to 0$ as $r \to \infty$. The absolute regularity coefficients corresponding to the sequence $(\vec Y_i)_{i \in \N}$ are defined by
$$
\beta_r = \sup_{p \in \N} \Ex \sup_{A \in \FF_{p+r}^\infty} | \Pr(A \mid \FF_{0}^p ) - \Pr(A) |, \qquad r \in \N, r > 0.
$$
The sequence $(\vec Y_i)_{i \in \N}$ is said to be {\em absolutely regular} if $\beta_r \to 0$ as $r \to \infty$. As $\alpha_r \leq \beta_r$, absolute regularity implies strong mixing. 

To establish the desired theoretical results, we rely on key results of \cite{DehWen10a} and \cite{DehWen10b} on the degenerate part of Hoeffding's decomposition~\eqref{eq:Hoeff_decomp} of a $U$-statistic. The latter require that the kernel $h$ satisfies certain moment conditions.

\begin{defn}
Given a strictly stationary sequence $(\vec X_i)_{i \in \N}$, a kernel $h$ is said to have uniform $\gamma$-moments, $\gamma > 0$, if there exists $B > 0$ such that
$$
\Ex \{ |h(\vec X,\vec X')|^\gamma \} \leq B
$$
for any random vector $(\vec X, \vec X')$ in $\R^{2d}$ with probability distribution in $\{ \Pr^{(\vec X_1, \vec X_k)} : k \in \N \} \cup \{ \Pr^{\vec X_1} \otimes \Pr^{\vec X_1} \}$.
\end{defn}

The following continuity conditions are also needed. The first one is due to \cite{DehWen10a}, the second one to \cite{DenKel86} \citep[see also][Definition 1.7]{DehWen10b}.

\begin{defn}
(a) A kernel $h$ is called $\Pr$-Lipschitz-continuous with constant $L > 0$ if
\[
\Ex \{ |h(\vec X, \vec Y)  - h(\vec X', \vec Y)| \1( \| \vec X - \vec X' \| \le \eps ) \}  \le L \eps
\]
for any $\eps > 0$ and any random vector $(\vec X, \vec X', \vec Y)$ in $\R^{3d}$ such that the probability distributions of $(\vec X, \vec Y)$ and $(\vec X',\vec Y)$ are in $\{ \Pr^{(\vec X_1, \vec X_k)} : k \in \N \} \cup \{ \Pr^{\vec X_1} \otimes \Pr^{\vec X_1} \}$. 

\smallskip
\noindent
(b) A kernel $h$ is said to satisfy the variation condition if there exists a constant $L > 0$  such that,  for any $\eps > 0$ and any independent random vectors $\vec X$ and $\vec X'$ with probability distribution $\Pr^{\vec X_1}$,
\[
\Ex \left\{ \sup_{ \|(\vec y, \vec y') - (\vec X, \vec X')\| \leq \eps \atop \|(\vec z, \vec z') - (\vec X, \vec X')\| \leq \eps } |h(\vec y, \vec y')  - h(\vec z, \vec z')|  \right\}\le L \eps.
\]
\end{defn}

Examples of kernels satisfying the $\Pr$-Lipschitz-continuity condition or the variation condition are given for instance in \cite{DehWen10b}; see also Section~\ref{sec:confint_theta}.

\subsection{Weak convergence of $\U_n$ under mixing}

Before presenting the proposed resampling schemes and stating consistency results, we study the asymptotics of $\U_n$ in~\eqref{eq:Un}. The following proposition is a rather immediate consequence of Theorem~2 of \cite{OodYos72} and Theorem~1 of \cite{DehWen10b}. Its proof is given in Appendix~\ref{proof:prop:weakUn}.

\begin{prop}[Asymptotics of $\U_n$]
\label{prop:weakUn}
Assume that $\vec X_1,\dots,\vec X_n$ is drawn from a strictly stationary sequence $(\vec X_i)_{i \in \N}$ and that $h$ has uniform $(2+\delta)$-moments for some $\delta > 0$. Furthermore, suppose that one of the following two conditions holds:
\begin{enumerate}[(i)]
\item $(\vec X_i)_{i \in \N}$ is absolutely regular and $\beta_r = O(r^{-b})$, $b > (2+\delta)/\delta$,
\item $(\vec X_i)_{i \in \N}$ is strongly mixing, $\Ex ( \| \vec X_1 \|^\gamma ) < \infty$ for some $\gamma > 0$, $h$ satisfies the $\Pr$-Lipschitz continuity or variation condition and $\alpha_r = O(r^{-b})$, $b > \max\{(3\gamma\delta + \delta + 5\gamma + 2)/(2\gamma\delta),(2+\delta)/\delta\}$.
\end{enumerate}
Then, 
\begin{equation}
\label{eq:asym_equiv_Un}
\sup_{s \in [0,1]} \bigg| \U_n(s) - \frac{2}{\sqrt{n}} \sum_{i=1}^{\ip{ns}} h_1(\vec X_i) \bigg| = o_\Pr(1)
\end{equation}
and
\begin{equation}
\label{eq:sigma_h1}
\sigma_{h_1}^2 = \Ex [ \{h_1(\vec X_1)\}^2 ] + 2 \sum_{i=2}^\infty \Ex\{h_1(\vec X_1)h_1(\vec X_i) \} < \infty.
\end{equation}
Consequently, $\U_n \leadsto \U = 2 \sigma_{h_1} \B$ in $\ell^\infty([0,1])$, where~`$\leadsto$' denotes weak convergence in the sense of Definition~1.3.3 in \cite{vanWel96} and $\B$ is a standard Brownian motion.
\end{prop}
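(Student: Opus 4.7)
The plan is to start from Hoeffding's decomposition applied to every partial sample: for $s\in[2/n,1]$,
\begin{equation*}
\U_n(s) \;=\; \frac{\ip{ns}}{\sqrt n}\bigl( U_{h,1:\ip{ns}}-\theta\bigr)
\;=\; \frac{2}{\sqrt n}\sum_{i=1}^{\ip{ns}} h_1(\vec X_i) \;+\; \frac{\ip{ns}}{\sqrt n}\,U_{h_2,1:\ip{ns}},
\end{equation*}
so that proving \eqref{eq:asym_equiv_Un} reduces to showing that the sequential degenerate part
\[
R_n(s) \;=\; \frac{\ip{ns}}{\sqrt n}\,U_{h_2,1:\ip{ns}}
\]
satisfies $\sup_{s\in[0,1]}|R_n(s)|=o_\Pr(1)$ (with the convention $R_n(s)=0$ for $s<2/n$).

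First I would deal with the linear part. The sequence $(h_1(\vec X_i))_{i\in\N}$ is strictly stationary, centered, and inherits the mixing coefficients of $(\vec X_i)_{i\in\N}$; moreover $h$ having uniform $(2+\delta)$-moments implies, via Jensen's inequality applied to $h_1(\vec x)=\Ex\{h(\vec x,\vec X)\}-\theta$, that $\Ex|h_1(\vec X_1)|^{2+\delta}<\infty$. Under either of the mixing rates in (i) or (ii), the Ibragimov-type covariance bound then yields absolute convergence of the series in \eqref{eq:sigma_h1}, so $\sigma_{h_1}^2<\infty$. The functional central limit theorem of Oodaira and Yoshihara (Theorem~2 of \citealp{OodYos72}) applied to $(h_1(\vec X_i))$ then gives
\[
s\mapsto \frac{2}{\sqrt n}\sum_{i=1}^{\ip{ns}} h_1(\vec X_i) \;\leadsto\; 2\sigma_{h_1}\B
\]
in $\ell^\infty([0,1])$.

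The main obstacle is the uniform negligibility of $R_n$, i.e.\ a \emph{sequential} rather than marginal control of the degenerate $U$-statistic. Here I would appeal directly to the results of \citet{DehWen10a,DehWen10b}: under the moment assumption on $h$ and either the absolute-regularity rate in (i) or the strong-mixing plus Lipschitz/variation assumptions in (ii), Theorem~1 of \cite{DehWen10b} (or its argument reused) yields $m\, U_{h_2,1:m}=O_\Pr(1)$, and by a union bound combined with the maximal inequalities underlying their proof (dyadic blocking over $m$), one obtains $\max_{2\le m\le n} m\,|U_{h_2,1:m}| = o_\Pr(\sqrt n)$. Since $\ip{ns}/\sqrt n \le \sqrt n$, this gives $\sup_{s\in[0,1]}|R_n(s)|=o_\Pr(1)$, which is exactly \eqref{eq:asym_equiv_Un}.

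Combining the two steps and invoking Slutsky's lemma in $\ell^\infty([0,1])$ yields $\U_n\leadsto 2\sigma_{h_1}\B$. The delicate ingredient is the uniform bound on the degenerate part; once this is extracted from the Dehling--Wendler machinery, the remainder of the proof is a routine combination of Hoeffding's decomposition with a classical FCLT for mixing sequences.
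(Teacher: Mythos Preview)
Your approach is essentially the paper's: Hoeffding decomposition, the Oodaira--Yoshihara FCLT for the linear part, and Theorem~1 of \cite{DehWen10b} for the degenerate remainder. The only sharpening is that no union bound or dyadic blocking is needed: Theorem~1 of \cite{DehWen10b} is already an almost-sure law-of-the-iterated-logarithm statement, namely $\sup_{k\ge 2} k^{1-\tau/2}|U_{h_2,1:k}|/a_k<\infty$ a.s.\ with $a_k=(\log k)^{3/2}\log\log k$ and a suitable $\tau\in[0,1)$ depending on the mixing rate, from which one gets directly
\[
n^{-1/2}\max_{2\le k\le n}k\,|U_{h_2,1:k}|
\;\le\;
\Bigl(\sup_{k\ge 2}\tfrac{k^{1-\tau/2}|U_{h_2,1:k}|}{a_k}\Bigr)\cdot n^{\tau/2-1/2}a_n
\;\as\;0.
\]
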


Interestingly enough, the sufficient mixing conditions become significantly simpler if $h$ is a bounded kernel. Under such a restriction, the above result was established for {\em $\Pr$-near epoch dependent} sequences by \citet[Theorem B.1]{DehVogWenWie14}.

\subsection{Dependent multiplier bootstraps} \label{subsec:unboot}

The proposed dependent multiplier bootstraps for $\U_n$ rely on the notion of {\em dependent multiplier sequence} due to \citet[Section 3.3]{Buh93} \citep[see also][]{BucKoj14}. 

\begin{defn}
\label{defn:mult_seq}
A sequence of random variables $(\xi_{i,n})_{i \in \N}$ is said to be a {\em dependent multiplier sequence} if:
\begin{enumerate}[({M}1)]
\item 
The sequence $(\xi_{i,n})_{i \in \N}$ is strictly stationary with $\Ex(\xi_{1,n}) = 0$, $\Ex(\xi_{1,n}^2) = 1$ and $\sup_{n \geq 1} \Ex(|\xi_{1,n}|^\nu) < \infty$ for all $\nu \geq 1$, and is independent of the available sample $\vec X_1,\dots,\vec X_n$.
\item 
There exists a sequence $\ell_n \to \infty$ of strictly positive constants such that $\ell_n = o(n)$ and the sequence $(\xi_{i,n})_{i \in \N}$ is $\ell_n$-dependent, i.e., $\xi_{i,n}$ is independent of $\xi_{i+h,n}$ for all $h > \ell_n$ and $i \in \N$. 
\item 
There exists a function $\varphi:\R \to [0,1]$, symmetric around 0, continuous at $0$, satisfying $\varphi(0)=1$ and $\varphi(x)=0$ for all $|x| > 1$ such that $\Ex(\xi_{1,n} \xi_{1+h,n}) = \varphi(h/\ell_n)$ for all $h \in \N$.
\end{enumerate}
\end{defn}

Let $M$ be a large integer and let $(\xi_{i,n}^{(1)})_{i \in \N},\dots,(\xi_{i,n}^{(M)})_{i \in \N}$ be $M$ independent copies of the same dependent multiplier sequence. Then, for any $m \in \{1,\dots,M\}$ and $s \in [0,1]$, let
\begin{equation}
\label{eq:H_Hm}
\Hb_n (s) = \frac{2}{\sqrt{n}} \sum_{i=1}^{\ip{ns}} h_1(\vec X_i), \qquad \Hb_n^{(m)} (s) = \frac{2}{\sqrt{n}} \sum_{i=1}^{\ip{ns}} \xi_{i,n}^{(m)} h_1(\vec X_i),
\end{equation}
where $h_1$ is defined in~\eqref{eq:h1}. The dependent multiplier central limit theorem stated in Proposition~\ref{prop:func_mult} then implies that, under suitable moment and mixing conditions, the processes $\Hb_n, \Hb_n^{(1)}, \dots, \Hb_n^{(M)}$ jointly converge weakly to independent copies of the same limit, suggesting to interpret $\Hb_n^{(1)}, \dots, \Hb_n^{(M)}$ as bootstrap replicates of $\Hb_n$. 

To provide some more insight on the latter statement, and before addressing the fact that the sample $h_1(\vec X_1),\dots,h_1(\vec X_n)$ is not necessarily observable, let us for a brief moment fix $s$ to 1. With the notation $\overline{h_1} = n^{-1} \sum_{i=1}^n h_1(\vec X_i)$, $\Hb_n(1)$ and $\Hb_n^{\scriptscriptstyle (m)}(1)$ can be rewritten as 
 $$
 \Hb_n (1) = 2 \sqrt{n} \left[ \overline{h_1} - \Ex\{h_1(\vec X_1)\} \right], \qquad
 \Hb_n^{(m)} (1) = 2 \sqrt{n} \left[ \frac{1}{n} \sum_{i=1}^n  (\xi_{i,n}^{(m)} +1 ) h_1(\vec X_i)  - \overline{h_1} \right],
 $$
 respectively, suggesting that the $m$th bootstrap sample is $(\xi_{i,n}^{\scriptscriptstyle  (m)} +1 )h_1(X_i)$, $i \in \{1,\dots,n\}$. In the case of the block bootstrap of \cite{Kun89} based on randomly selecting $k$ potentially overlapping blocks of length $\ell_n$ (assume for simplicity that $k=n/\ell_n \in \N$), the $m$th bootstrap sample can be written as $W_{i,n}^{\scriptscriptstyle  (m)} h_1(X_i)$, $i \in \{1,\dots,n\}$, where $W_{i,n}^{\scriptscriptstyle  (m)}$ is the number of blocks that contain $h_1(X_i)$. Proceeding for instance as in \citet[Section 3.3]{Buh93}, it can be verified that $(W_{i,n}^{\scriptscriptstyle   (m)}-1)$, $i \in \{1,\dots,n\}$, can almost be regarded as a portion of a dependent multiplier sequence constructed by taking $\varphi$ in Definition~\ref{defn:mult_seq} to be the triangular (Bartlett) kernel. \citet{Buh93} \citep[see also][]{Sha10,PapPol01} then observed that smoother kernels for $\varphi$ would reduce the bias of the estimator of the underlying long-run variance, thereby improving the order of accuracy of the corresponding mean squared error (see~\eqref{eq:bias_var} and \eqref{eq:MSE} in the next section).
 
Because of~\eqref{eq:asym_equiv_Un}, the multiplier processes $\Hb_n^{(m)}$ in \eqref{eq:H_Hm} can actually be regarded as bootstrap replicates of $\U_n$ as well. They are however not necessarily computable as, depending on the choice of $h$, the sample $h_1(\vec X_1),\dots,h_1(\vec X_n)$ is not necessarily observable. Starting from~\eqref{eq:h1} and given integers $1 \leq k \leq l \leq n$, it is natural to estimate the sample $h_1(\vec X_k),\dots,h_1(\vec X_l)$ by the pseudo-observations $\hat h_{1,k:l}(\vec X_k),\dots, \hat h_{1,k:l}(\vec X_l)$, where
\begin{equation}
\label{eq:hath1kl}
\hat h_{1,k:l}(\vec X_i) = \frac{1}{l-k} \sum_{j=k \atop j \neq i}^l h(\vec X_i, \vec X_j) - U_{h,k:l}, \qquad i \in \{k,\dots,l\},
\end{equation}
with the convention that $\hat h_{1,k:l} = 0$ if $k=l$. Fix $m \in \{1,\dots,M\}$. We then consider the two following computable versions of $\Hb_n^{(m)}$ in~\eqref{eq:H_Hm} defined, for any $s \in [0,1]$, as
\begin{equation}
\label{eq:hatUnm}
\hat \U_n^{(m)} (s) = \frac{2}{\sqrt{n}} \sum_{i=1}^{\ip{ns}} \xi_{i,n}^{(m)} \hat h_{1,1:n} (\vec X_i), \qquad s \in [0,1],
\end{equation}
and
\begin{equation}
\label{eq:checkUnm}
\check \U_n^{(m)} (s) = \frac{2}{\sqrt{n}} \sum_{i=1}^{\ip{ns}} \xi_{i,n}^{(m)} \hat h_{1,1:\ip{ns}} (\vec X_i), \qquad s \in [0,1],
\end{equation} 
respectively. The two processes above are to be interpreted as bootstrap replicates of the process $\U_n$ defined in~\eqref{eq:Un}. The process $\check \U_n^{(m)}$ was considered in the case of independent observations in the seminal work of \cite{GomHor02}, while the process $\hat \U_n^{(m)}$ is a variation of the latter that uses all the available observations to estimate $h_1(\vec X_1),\dots,h_1(\vec X_{\ip{ns}})$. In the related partial-sum setting considered in \cite{BucKojRohSeg14}, the ``check'' approach {\em à la}~\eqref{eq:checkUnm} led to better finite-sample performance, while the ``hat'' approach {\em à la}~\eqref{eq:hatUnm} was found superior in \cite{HolKojQue13}. In the setting under consideration, the quality of the bootstrap approximation might be affected by the kernel $h$, which prompted us to study both approaches theoretically.  The following result is proved in Appendix~\ref{proof:prop:multUn}.

\begin{prop}[Two dependent multiplier bootstraps for $\U_n$]
\label{prop:multUn}
Assume that $\vec X_1,\dots,\vec X_n$ is drawn from a strictly stationary sequence $(\vec X_i)_{i \in \N}$ and that $h$ has uniform $(4+\delta)$-moments for some $\delta > 0$. Also,  let $(\xi_{i,n}^{(1)})_{i \in \N}$,\dots,$(\xi_{i,n}^{(M)})_{i \in \N}$ be independent copies of the same dependent multiplier sequence satisfying (M1)--(M3) in Definition~\ref{defn:mult_seq} such that $\ell_n = O(n^{1/2 - \eps})$ for some $1/(6+2\delta) < \eps < 1/2$. Furthermore, suppose that one of the following two conditions holds:
\begin{enumerate}[(i)]
\item $(\vec X_i)_{i \in \N}$ is absolutely regular with $\beta_r = O(r^{-b})$, $b > 2 (4+\delta)/\delta$,
\item $(\vec X_i)_{i \in \N}$ is strongly mixing, $\Ex ( \| \vec X_1 \|^\gamma ) < \infty$ for some $\gamma > 0$, $h$ satisfies the $\Pr$-Lipschitz continuity or variation condition and $\alpha_r = O(r^{-b})$, $b > \max\{ (3\gamma\delta + \delta + 5\gamma + 2)/(\gamma\delta), 2 (4+\delta)/\delta \}$. 
\end{enumerate}
Then, for any $m \in \{1,\dots,M\}$,
\begin{align}
\label{eq:asym_equiv_hatUnm}
\sup_{s \in [0,1]} \bigg| \hat \U_n^{(m)}(s) - \frac{2}{\sqrt{n}} \sum_{i=1}^{\ip{ns}} \xi_{i,n}^{(m)} h_1(\vec X_i) \bigg| &= o_\Pr(1), \\ 
\label{eq:asym_equiv_checkUnm}
\sup_{s \in [0,1]} \bigg| \check \U_n^{(m)}(s) - \frac{2}{\sqrt{n}} \sum_{i=1}^{\ip{ns}} \xi_{i,n}^{(m)} h_1(\vec X_i) \bigg| &= o_\Pr(1),
\end{align}
and
\begin{align*}
\left(\U_n,\hat \U_n^{(1)},\dots,\hat \U_n^{(M)},\check \U_n^{(1)},\dots,\check \U_n^{(M)} \right) &\leadsto \left(\U,\U^{(1)},\dots,\U^{(M)},\U^{(1)},\dots,\U^{(M)} \right)
\end{align*}
in $\{\ell^\infty([0,1])\}^{2M+1}$,  where $\U$ is the weak limit of $\U_n$ given in Proposition~\ref{prop:weakUn}, and $\U^{(1)},\dots,\U^{(M)}$ are independent copies of $\U$.
\end{prop}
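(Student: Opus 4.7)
The plan is to reduce everything to Proposition~\ref{prop:weakUn} and the dependent multiplier central limit theorem for the linearized processes $\Hb_n, \Hb_n^{(1)}, \ldots, \Hb_n^{(M)}$ of~\eqref{eq:H_Hm}. Once the two asymptotic equivalences~\eqref{eq:asym_equiv_hatUnm} and~\eqref{eq:asym_equiv_checkUnm} are established, joint weak convergence of $(\U_n, \hat\U_n^{(1)}, \ldots, \check\U_n^{(M)})$ follows from Proposition~\ref{prop:weakUn}, the CLT for $\Hb_n^{(1)},\ldots,\Hb_n^{(M)}$ (which gives joint convergence to $(\U, \U^{(1)}, \ldots, \U^{(M)})$ by the mutual independence of the multiplier sequences and their independence from the data), and Slutsky's lemma; the ``check'' block of the limit vector matches the ``hat'' block because both families of processes are approximated by the same linearizations. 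The central work is therefore to prove the two approximations, and the cleanest way to expose the algebra is through Hoeffding's decomposition.

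For~\eqref{eq:asym_equiv_hatUnm}, substituting $h(\vec x, \vec y) = \theta + h_1(\vec x) + h_1(\vec y) + h_2(\vec x, \vec y)$ into~\eqref{eq:hath1kl} with $k=1$, $l=n$ and expanding $U_{h,1:n}$ via~\eqref{eq:Hoeff_decomp} yields, after cancellation,
\begin{equation*}
\hat h_{1,1:n}(\vec X_i) - h_1(\vec X_i) = A_{n,i} + B_{n,i},
\end{equation*}
with $A_{n,i} = \frac{1}{n-1}\sum_{j\neq i} h_1(\vec X_j) - \frac{2}{n}\sum_{j=1}^n h_1(\vec X_j)$ and $B_{n,i} = \frac{1}{n-1}\sum_{j\neq i} h_2(\vec X_i, \vec X_j) - U_{h_2,1:n}$. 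The $A$-piece is easy: writing $S_n = \sum_{j=1}^n h_1(\vec X_j) = O_\Pr(\sqrt n)$, one sees $A_{n,i} = O(1/n)\,S_n - h_1(\vec X_i)/(n-1)$; combined with the uniform bound $\sup_s |\sum_{i=1}^{\ip{ns}}\xi_{i,n}^{(m)}| = O_\Pr(\sqrt{n\ell_n})$ (immediate from unit variance and $\ell_n$-dependence of the multipliers), this gives $\frac{2}{\sqrt n}\sum_{i=1}^{\ip{ns}}\xi_{i,n}^{(m)} A_{n,i} = o_\Pr(1)$ uniformly in $s$ under $\ell_n = o(n)$.

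The main obstacle is the degenerate contribution $\frac{2}{\sqrt n}\sum_{i=1}^{\ip{ns}} \xi_{i,n}^{(m)} B_{n,i}$, which splits as
\begin{equation*}
\frac{2}{\sqrt n\,(n-1)} \sum_{i=1}^{\ip{ns}} \sum_{j\neq i} \xi_{i,n}^{(m)} h_2(\vec X_i, \vec X_j) \ -\ \frac{2\,U_{h_2,1:n}}{\sqrt n} \sum_{i=1}^{\ip{ns}} \xi_{i,n}^{(m)}.
\end{equation*}
The second term vanishes uniformly because Theorem~1 of \cite{DehWen10b} delivers $\sqrt n\,U_{h_2,1:n} = o_\Pr(1)$ (already exploited in the proof of Proposition~\ref{prop:weakUn}) against the $O_\Pr(\sqrt{n\ell_n})$ bound above. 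The first term is a multiplier-weighted partial sum of the data-measurable quantities $T_{i,n} := \sum_{j\neq i} h_2(\vec X_i, \vec X_j)$; I would control it by computing its conditional second moment given $\vec X_1, \ldots, \vec X_n$, using the covariance $\Ex(\xi_{i,n}^{(m)}\xi_{i',n}^{(m)}) = \varphi((i-i')/\ell_n)$ to restrict to pairs with $|i-i'| \le \ell_n$, and then bounding $\sum_{|i-i'|\le \ell_n} \Ex(T_{i,n}T_{i',n})$ via the degenerate $U$-statistic moment inequalities of \cite{DehWen10a,DehWen10b} under the relevant mixing assumption; uniformity over $s$ would follow from an Ottaviani-type maximal inequality for partial sums.

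The proof of~\eqref{eq:asym_equiv_checkUnm} follows the same template with $\hat h_{1,1:\ip{ns}}$ in place of $\hat h_{1,1:n}$, producing $s$-dependent analogues $A_{n,i}^{(s)}$ and $B_{n,i}^{(s)}$. The extra difficulty is the need to control $\max_{k \le n} k\,|U_{h_2,1:k}|$ and $\max_{k \le n} |\sum_{j=1}^k h_1(\vec X_j)|$, i.e.\ uniform (in the upper index $k$) versions of the Dehling-Wendler bounds --- this is where I expect the most technical effort. A boundary truncation separating $s$ with $\ip{ns}$ below a slowly growing threshold (handled by crude moment bounds from the $(4+\delta)$-moment assumption) from the main range, on which the conditional-second-moment strategy above applies verbatim, should complete the argument.
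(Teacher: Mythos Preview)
Your overall architecture is right and matches the paper: Hoeffding-decompose $\hat h_{1,k:l}(\vec X_i)-h_1(\vec X_i)$ into a linear piece and a degenerate piece, kill both uniformly in $s$, then invoke the dependent multiplier CLT (the paper's Proposition~\ref{prop:func_mult}) together with~\eqref{eq:asym_equiv_Un} and Slutsky. Your treatment of the linear contributions $A_{n,i}$ and of the $U_{h_2,1:n}$-part of $B_{n,i}$ is also essentially what the paper does.

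The genuine gap is in your plan for the multiplier-weighted degenerate sum. Two points. First, exploiting the $\ell_n$-dependence of the multipliers is unnecessary and does not mesh with the available tool: Lemma~4.4 in \cite{DehWen10b} bounds the \emph{full} four-fold sum $\sum_{i_1,j_1,i_2,j_2}|\Ex\{h_2(\vec X_{i_1},\vec X_{j_1})h_2(\vec X_{i_2},\vec X_{j_2})\}|$, not the sum restricted to $|i_1-i_2|\le\ell_n$, so you gain nothing by inserting $\varphi((i-i')/\ell_n)$. The paper simply uses the crude bound $|\Ex(\xi_{i,n}^{(m)}\xi_{i',n}^{(m)})|\le 1$ throughout (indeed its key Lemma~\ref{lem:Wn_p_0} only assumes~(M1)). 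Second, an ``Ottaviani-type'' inequality is the wrong device here: conditionally on the data the summands $\xi_{i,n}^{(m)}T_{i,n}$ are $\ell_n$-dependent, not independent, so Ottaviani does not apply without an additional blocking layer whose cost you have not accounted for. The paper instead bounds the \emph{second moment of increments} $\Ex[\{\W_n^{(m)}(t)-\W_n^{(m)}(s)\}^2]$ (its Lemma~\ref{lem:increments_Wn}, via a five-term splitting and Lemmas~4.1, 4.2, 4.4 of \cite{DehWen10b}) and feeds these into Theorem~10.2 of \cite{Bil99}, which needs no independence whatsoever; a boundary range $s\in[2/n,n^{-1/2}]$ is handled by a union bound, exactly as you anticipate.

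A smaller remark on the ``check'' side: you flag $\max_k k|U_{h_2,1:k}|$ as the crux, but that quantity is already controlled in the proof of Proposition~\ref{prop:weakUn} via Theorem~1 of \cite{DehWen10b}. The actual heavy lifting for~\eqref{eq:asym_equiv_checkUnm} is the multiplier-weighted degenerate term $\W_n^{(m)}(s)=\frac{2}{\sqrt n(\ip{ns}-1)}\sum_{i\neq j}^{\ip{ns}}\xi_{i,n}^{(m)}h_2(\vec X_i,\vec X_j)$, whose $s$-dependent normalization \emph{and} summation range make the increment computation delicate; this is precisely the content of Lemmas~\ref{lem:Wn_p_0}--\ref{lem:increments_Wn}. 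For the ``hat'' version the paper splits the analogous term into $\K_n^{(m)}+\Lb_n^{(m)}$ (your combined $T_{i,n}$-sum) and treats each by the same increment argument.
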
 

Notice that most results establishing the asymptotic validity of resampling procedures involve weak convergence of conditional laws. Unlike such results, Proposition~\ref{prop:multUn} above is of an unconditional nature. As explained in \citet[Remark 2.3]{BucKoj14} and as shall be discussed further in the applications of Sections~\ref{sec:confint_theta} and~\ref{sec:multDn} below, the adopted unconditional approach leads to meaningful validity conclusions in most, if not all, statistical contexts of practical interest.

\subsection{Estimation of the bandwidth parameter $\ell_n$}
\label{sec:estim-bandw-param}

From a practical perspective, the use of either of the two dependent multiplier bootstraps studied in the previous section requires the choice of the bandwidth parameter~$\ell_n$ appearing in the definition of dependent multiplier sequences (see Definition~\ref{defn:mult_seq}). As mentioned in \cite{BucKoj14}, since $\ell_n$ plays a role somehow analogous to that of the block length in the block bootstrap, its value is expected to have a crucial influence on the finite-sample performance of the dependent multiplier bootstraps. The aim of this section is to propose an estimator of $\ell_n$ in the spirit of that investigated in \cite{PapPol01} and \cite{PolWhi04}, among others, for other resampling schemes. 

From~\eqref{eq:asym_equiv_hatUnm} and~\eqref{eq:asym_equiv_checkUnm}, we see that the two dependent multiplier bootstraps under consideration are asymptotically equivalent to a dependent multiplier bootstrap for the mean (multiplied by 2) of the typically unobservable sequence $h_1(\vec X_1),\dots,h_1(\vec X_n)$. Analogously to \cite{PapPol01} \citep[see also][]{PolWhi04,PatPolWhi09}, the idea is then to estimate $\ell_n$ as the value that minimizes asymptotically the mean square error of
$$
\sigma_{h_1,n}^2 = \Var_\xi \left\{ \frac{1}{\sqrt{n}} \sum_{i=1}^n \xi_{i,n} h_1(\vec X_i) \right\}, 
$$
where $\Var_\xi$ denotes the variance conditional on the data and $(\xi_{i,n})_{i \in \N}$ is a dependent multiplier sequence. Interestingly enough, it is easy to verify that the above estimator of $\sigma_{h_1}^2$ in~\eqref{eq:sigma_h1} can be rewritten as
\begin{equation}
\label{eq:sigma_h1n}
\sigma_{h_1,n}^2 = \frac{1}{n} \sum_{i,j=1}^n \varphi \left(\frac{i-j}{\ell_n}\right) h_1(\vec X_i) h_1(\vec X_j),
\end{equation}
and thus has the form of the HAC kernel estimator of \cite{deJDav00}.

Additionally to the conditions of Proposition~\ref{prop:multUn}, suppose that we have $b > 3 (4+\delta)/(2+\delta)$, that $\varphi$ in Definition~\ref{defn:mult_seq} is twice continuously differentiable on $(-1,1)$ with $\varphi''(0)\ne 0$ and $\sup_{x \in (-1,1)} \varphi''(x) < \infty$,  and  that $\varphi$ is Lipschitz continuous on $\R$. Then, adapting the proofs of Propositions 5.1 and 5.2 in \cite{BucKoj14} (see also Lemmas~3.12 and~3.13 in \citealp{Buh93} and Proposition 2.1 in \citealp{Sha10}), we obtain that
\begin{equation}
\label{eq:bias_var}
\Ex( \sigma_{h_1,n}^2 ) - \sigma_{h_1}^2 = \frac{\Gamma}{\ell_n^2} + o(\ell_n^{-2}) \qquad \mbox{and} \qquad \Var( \sigma_{h_1,n}^2  ) = \frac{\ell_n}{n} \Delta +  o(\ell_n/n),
\end{equation}
where $\Gamma = \varphi''(0)/2  \sum_{k=-\infty}^\infty k^2 \gamma(k)$ with $\gamma(k) = \Cov\{ h_1(\vec X_0),  h_1(\vec X_{|k|}) \}$, and where $\Delta = 2 \sigma_{h_1}^4 \int_{-1}^1 \varphi(x)^2 \dd x$. As a consequence, the mean squared error of $\sigma_{h_1,n}^2$ is
\begin{equation}
\label{eq:MSE}
\MSE ( \sigma_{h_1,n}^2 ) = \frac{\Gamma^2 }{\ell_n^4} + \Delta \frac{\ell_n}{n} + o(\ell_n^{-4}) + o(\ell_n/n).
\end{equation}
It follows that the value of $\ell_n$ that minimizes the mean square error of $\sigma_{h_1,n}^2$ is, asymptotically, 
\begin{equation}
\label{eq:lnopt}
\ell_n^{opt} = \left( \frac{4 \Gamma^2 }{\Delta} \right)^{1/5} n^{1/5}.
\end{equation}
To estimate $\ell_n^{opt}$, we first estimate the sequence $h_1(\vec X_1),\dots,h_1(\vec X_n)$ by the pseudo-observations $\hat h_{1,1:n}(\vec X_1),\dots,\hat h_{1,1:n}(\vec X_n)$, where $\hat h_{1,1:n}(\vec X_i)$ is defined as in~\eqref{eq:hath1kl}. Then, we adapt the approach of \citet{PapPol01} \citep[see also][]{PolWhi04} to the current context: let $\hat \gamma_n(k)$ be the sample autocovariance at lag $k$ computed from $\hat h_{1,1:n}(\vec X_1),\dots,\hat h_{1,1:n}(\vec X_n)$ and estimate $\Gamma$ and $\Delta$ by
$$
\hat \Gamma_n = \varphi''(0)/2  \sum_{k=-L_n}^{L_n} \lambda(k/L_n) k^2 \hat \gamma_n(k)
$$
and
$$ 
\hat \Delta_n = 2 \left\{ \sum_{k=-L_n}^{L_n} \lambda(k/L_n) \hat \gamma_n(k) \right\}^2 \left\{ \int_{-1}^1 \varphi(x)^2 \dd x \right\},
$$
respectively, where $\lambda(x) = [ \{ 2(1-|x|) \} \vee 0 ] \wedge 1$, $x \in \R$, is the ``flat top'' (trapezoidal) kernel of \cite{PolRom95} and $L_n$ is the smallest integer $k$ after which $\hat \rho_n(k)$, the sample autocorrelation at lag $k$ estimated from $\hat h_{1,1:n}(\vec X_1),\dots,\hat h_{1,1:n}(\vec X_n)$, appears negligible. The latter is determined automatically by means of the algorithm described in detail in \citet[Section 3.2]{PolWhi04}. Our implementation is based on Matlab code by A.J. Patton (available on his web page) and its \textsf{R} version by J. Racine and C. Parmeter. The resulting estimate of $\ell_n^{opt}$ in~\eqref{eq:lnopt} is denoted by $\hat \ell_n^{opt}$ as we continue.

\subsection{Applications to confidence intervals for $\theta$}
\label{sec:confint_theta}

A first straightforward application of the previous results is the computation of confidence intervals for $\theta$ in~\eqref{eq:theta}. To fix ideas, we consider three possible kernels: 
\begin{gather}
\label{eq:ker_ef}
e(x,y) = (x-y)^2/2, \qquad f(x,y) = |x-y|, \qquad \mbox{for } x,y \in \R, \\
\label{eq:ker_g}
g(\vec x,\vec y) = \1(\vec x < \vec y) + \1(\vec y < \vec x), \qquad \mbox{for } \vec x, \vec y \in \R^d.
\end{gather}
The kernels $e$ and $f$ are $\Pr$-Lipschitz-continuous as verified in \citet[Example 1.5]{DehWen10a} and \citet[Example 1.8]{DehWen10b}, respectively, while the kernel $g$ satisfies the variation condition provided the c.d.f.\ of the distribution of $\vec X_1$ is Lipschitz continuous \citep[][Appendix~C]{DehVogWenWie14}. If $d=1$ and $h=e$ (resp. $h=f$), $\theta$ is the variance of $X_1$ (resp.\ the population version of Gini's mean difference). If $d \geq 2$, the distribution of $\vec X_1$ has continuous margins and $h=g$, $\theta$ is, up to a simple affine linear transformation, a natural multivariate extension of Kendall's tau \citep{Joe90}. 

To obtain a confidence interval for $\theta$ given a sequence of suitably weakly dependent observations, a first natural possibility is to use the fact that, according to Proposition~\ref{prop:weakUn}, $\U_n(1)$ is asymptotically centered normal with variance $4\sigma_{h_1}^2$ given in~\eqref{eq:sigma_h1}. In the context under consideration, a natural estimator of $\sigma_{h_1}^2$ is~\eqref{eq:sigma_h1n}, in which $h_1(X_i)$ is estimated by $\hat h_{1,1:n}(X_i)$ as defined in~\eqref{eq:hath1kl} and in which the parameter $\ell_n$ gets replaced by the estimator $\hat \ell_n^{opt}$ introduced in the previous section. We shall denote this estimator by $\hat \sigma_{\hat h_{1,1:n}}^2$ as we continue. The resulting confidence interval of asymptotic level $1-\alpha$ is then
$$
\mathrm{CI}_{1,n} = \left[ U_{h,1:n} \pm \Phi^{-1}(1 - \alpha/2) n^{-1/2} 2 \hat \sigma_{\hat h_{1,1:n}} \right],
$$
where $\Phi$ denotes the c.d.f.\ of the standard normal distribution.

A second possibility consists of basing confidence interval on empirical quantiles computed from a sample of $M$ bootstrap replicates of $\U_n(1)$. In the studied setting, the latter involves generating $M$ independent copies of a dependent multiplier sequence 
and computing $\hat \U_n^{(1)}(1),\dots,\hat \U_n^{(M)}(1)$, where $\hat \U_n^{(m)}$ is defined in~\eqref{eq:hatUnm} (notice that $\hat \U_n^{(m)}(1) = \check \U_n^{(m)}(1)$). The resulting confidence interval of asymptotic level $1-\alpha$ is then
$$
\mathrm{CI}_{2,n} = \left[ U_{h,1:n} - n^{-1/2} \hat \U_n^{(1-\alpha/2)(M+1):M}(1), U_{h,1:n} - n^{-1/2}  \hat \U_n^{\alpha/2(M+1):M}(1) \right],
$$
where $\hat \U_n^{1:M}(1),\dots,\hat \U_n^{M:M}(1)$ are the order statistics obtained from $\hat \U_n^{(1)}(1),\dots,\hat \U_n^{(M)}(1)$. The above confidence interval is related to the so-called {\em basic bootstrap confidence interval} \citep[see, e.g.,][Chapter 5]{DavHin97}. The fact that $\mathrm{CI}_{2,n}$ is of asymptotic level $1-\alpha$ can be easily verified by combining Proposition~\ref{prop:multUn} with Proposition F.1 in \cite{BucKoj14}: Under the conditions of Proposition~\ref{prop:multUn}, as $n \to \infty$ followed by $M \to \infty$, $\Pr( \theta \in \mathrm{CI}_{2,n} )$ tends to $1-\alpha$. From a practical perspective, a natural possibility is to generate the required dependent multiplier sequences with $\ell_n = \hat \ell_n^{\scriptscriptstyle opt}$.

The computation of $\hat \ell_n^{\scriptscriptstyle opt}$, $\mathrm{CI}_{1,n}$ and $\mathrm{CI}_{2,n}$ requires the choice of the function $\varphi$ introduced in Definition~\ref{defn:mult_seq}. Following~\cite{BucKoj14}, throughout the paper, we opted for the function 
\begin{equation}
\label{eq:parzen_varphi}
x \mapsto (\kappa_P \star \kappa_P)(2x) / (\kappa_P \star \kappa_P)(0), 
\end{equation}
where `$\star$' denotes the convolution operator and $\kappa_P$ is the Parzen kernel, that is,
\begin{equation}
\label{eq:parzen}
\kappa_{P}(x) = (1 - 6x^2 + 6|x|^3) \1(|x| \leq 1/2) + 2(1-|x|)^3\1(1/2 < |x| \leq 1), \qquad x \in \R.
\end{equation}
The latter choice is theoretically sensible in view of~\eqref{eq:bias_var} and~\eqref{eq:MSE}, and was also found to lead to good finite-sample performance in the numerical experiments presented in \citet[Section 6]{BucKoj14}.

\setlength{\tabcolsep}{4pt}
\begin{table}[t!]
\centering
\caption{For $h \in \{e,f\}$, coverage percentages of $\mathrm{CI}_{1,n}$ and $\mathrm{CI}_{2,n}$  estimated from 2,000 univariate samples of size $n$ generated from an AR1 model with parameter $\zeta \in \{0,0.5,0.9\}$ and either standard normal (first horizontal block) or $t_5$ (second horizontal block) innovations.} 
\label{covprob}
{\small
\begin{tabular}{rrrrrrrrrrrrrr}
  \hline
  \multicolumn{2}{c}{} & \multicolumn{6}{c}{variance ($h=e$)} & \multicolumn{6}{c}{Gini's mean diff. ($h=f$)}  \\
\cmidrule(lr){3-8} \cmidrule(lr){9-14}
\multicolumn{2}{c}{}& \multicolumn{2}{c}{$\zeta=0$}  & \multicolumn{2}{c}{$\zeta=0.5$} & \multicolumn{2}{c}{$\zeta=0.9$} & \multicolumn{2}{c}{$\zeta=0$} & \multicolumn{2}{c}{$\zeta=0.5$} & \multicolumn{2}{c}{$\zeta=0.9$} \\
\cmidrule(lr){3-4} \cmidrule(lr){5-6} \cmidrule(lr){7-8} \cmidrule(lr){9-10} \cmidrule(lr){11-12} \cmidrule(lr){13-14} $\alpha$ & $n$ & $\mathrm{CI}_{1,n}$ & $\mathrm{CI}_{2,n}$ & $\mathrm{CI}_{1,n}$ & $\mathrm{CI}_{2,n}$ & $\mathrm{CI}_{1,n}$ & $\mathrm{CI}_{2,n}$ & $\mathrm{CI}_{1,n}$ & $\mathrm{CI}_{2,n}$ & $\mathrm{CI}_{1,n}$ & $\mathrm{CI}_{2,n}$ & $\mathrm{CI}_{1,n}$ & $\mathrm{CI}_{2,n}$ \\ \hline
0.10 & 25 & 78.0 & 78.6 & 67.0 & 68.0 & 20.8 & 21.7 & 81.6 & 81.5 & 73.0 & 74.1 & 26.6 & 27.4 \\ 
   & 50 & 83.0 & 83.4 & 74.8 & 75.8 & 39.3 & 40.4 & 86.4 & 86.7 & 79.7 & 80.4 & 43.9 & 44.9 \\ 
   & 100 & 84.7 & 85.0 & 81.3 & 82.0 & 56.0 & 56.3 & 88.6 & 88.7 & 84.0 & 84.5 & 62.2 & 63.1 \\ 
   & 200 & 87.7 & 87.6 & 85.8 & 86.2 & 69.6 & 69.9 & 88.2 & 88.4 & 85.2 & 85.9 & 68.9 & 69.1 \\ 
  0.05 & 25 & 85.5 & 86.0 & 76.4 & 77.3 & 24.9 & 25.6 & 88.2 & 88.0 & 79.4 & 79.7 & 31.3 & 32.3 \\ 
   & 50 & 89.2 & 89.4 & 81.5 & 82.1 & 45.3 & 46.1 & 91.2 & 91.3 & 85.9 & 86.9 & 54.0 & 55.0 \\ 
   & 100 & 92.4 & 92.3 & 89.4 & 89.9 & 62.7 & 63.3 & 92.9 & 93.1 & 88.4 & 88.9 & 70.2 & 70.8 \\ 
   & 200 & 93.9 & 94.1 & 91.1 & 91.4 & 73.6 & 73.8 & 93.6 & 93.8 & 91.3 & 91.7 & 76.7 & 76.9 \\ 
  0.01 & 25 & 91.4 & 91.7 & 81.4 & 82.2 & 33.4 & 34.7 & 94.6 & 94.2 & 89.3 & 89.5 & 42.6 & 44.2 \\ 
   & 50 & 95.5 & 95.6 & 87.7 & 88.3 & 47.7 & 48.5 & 97.0 & 97.0 & 91.9 & 92.5 & 61.1 & 62.2 \\ 
   & 100 & 96.8 & 96.9 & 93.9 & 94.1 & 67.8 & 68.3 & 97.7 & 97.6 & 95.2 & 95.4 & 76.5 & 77.3 \\ 
   & 200 & 97.9 & 98.0 & 95.2 & 95.4 & 82.9 & 83.5 & 98.8 & 98.7 & 97.2 & 97.4 & 87.8 & 87.9 \\ 
   \hline0.10 & 25 & 70.2 & 71.1 & 62.1 & 63.4 & 19.2 & 20.0 & 80.7 & 81.2 & 69.8 & 70.6 & 27.1 & 27.9 \\ 
   & 50 & 79.1 & 79.6 & 69.3 & 70.6 & 39.1 & 40.2 & 85.1 & 85.6 & 76.6 & 77.6 & 44.6 & 45.6 \\ 
   & 100 & 81.7 & 82.0 & 76.3 & 77.3 & 54.0 & 54.6 & 85.2 & 85.5 & 80.6 & 81.3 & 56.6 & 57.2 \\ 
   & 200 & 85.7 & 85.8 & 81.9 & 82.3 & 61.1 & 61.6 & 89.2 & 89.3 & 85.3 & 85.8 & 69.6 & 70.2 \\ 
  0.05 & 25 & 75.7 & 76.7 & 67.0 & 68.9 & 27.3 & 28.2 & 87.2 & 87.5 & 77.0 & 78.3 & 33.6 & 34.8 \\ 
   & 50 & 83.3 & 83.5 & 73.6 & 74.4 & 43.0 & 44.3 & 90.7 & 91.0 & 83.9 & 84.6 & 49.8 & 50.8 \\ 
   & 100 & 86.0 & 86.3 & 82.3 & 82.8 & 61.8 & 62.5 & 91.8 & 91.9 & 86.0 & 86.6 & 65.8 & 66.5 \\ 
   & 200 & 88.3 & 88.3 & 89.4 & 89.6 & 69.6 & 70.1 & 93.3 & 93.3 & 91.4 & 91.9 & 76.9 & 77.3 \\ 
  0.01 & 25 & 85.5 & 86.0 & 76.5 & 77.4 & 33.2 & 34.5 & 92.3 & 92.3 & 84.4 & 84.9 & 40.8 & 42.1 \\ 
   & 50 & 90.8 & 91.2 & 83.2 & 84.1 & 50.2 & 51.2 & 95.2 & 95.3 & 91.3 & 91.8 & 60.8 & 62.0 \\ 
   & 100 & 91.1 & 91.3 & 88.2 & 88.6 & 64.7 & 65.3 & 97.4 & 97.5 & 94.9 & 95.2 & 77.0 & 77.5 \\ 
   & 200 & 94.9 & 94.9 & 94.4 & 94.5 & 78.7 & 79.0 & 97.4 & 97.4 & 95.8 & 96.0 & 86.0 & 86.2 \\ 
   \hline
\end{tabular}
}
\end{table}

As a brief illustration, Table~\ref{covprob} reports coverage percentages when $h \in \{e,f\}$ of $\mathrm{CI}_{1,n}$ and $\mathrm{CI}_{2,n}$ estimated from 2,000 univariate samples of size $n$ generated from an AR1 model with parameter $\zeta \in \{0,0.5,0.9\}$ and either standard normal or $t_5$ innovations.    The setting $\zeta=0$ (resp.\ $\zeta=0.5$, $\zeta = 0.9$) corresponds to serial independence (resp.\ moderate, strong) serial dependence. Although this was not always necessary, the true value of $\theta$ was estimated from a sample of size 20,000 using~\eqref{eq:Uh1n}. The number of multiplier bootstrap replicates  necessary to compute $\mathrm{CI}_{2,n}$ was set to $M=4999$. The corresponding dependent multiplier sequences were generated using the ``moving average approach'' proposed initially in \citet[Section~6.2]{Buh93} and revisited in \citet[Section~5.2]{BucKoj14}. A standard normal sequence was used for the required initial i.i.d.\ sequence. The kernel function $\kappa$ in that procedure was chosen to be the Parzen kernel $\kappa_P$ defined in~\eqref{eq:parzen}, which amounts to choosing~\eqref{eq:parzen_varphi} for the function $\varphi$ in Definition~\ref{defn:mult_seq}.

As one can see, $\mathrm{CI}_{1,n}$ and $\mathrm{CI}_{2,n}$ are too narrow for the sample sizes under consideration. Unsurprisingly, the coverage rates are particularly poor for small $n$ and strong serial dependence ($\zeta = 0.9$). In all settings under serial dependence ($\zeta \in \{0.5,0.9\}$), $\mathrm{CI}_{2,n}$ displays better coverage rates than $\mathrm{CI}_{1,n}$. The difference, as expected, decreases as $n$ increases. As observed in other settings, the use of {\em studentized} bootstrap confidence intervals \citep[see, e.g.,][Chapter 5]{DavHin97} could lead to improved coverage rates. The latter would require the availability of a resampling scheme for the estimator $\hat \sigma_{\scriptscriptstyle \hat h_{1,1:n}}^2$ of $\sigma_{h_1}^2$ involved in the expression of~$\mathrm{CI}_{1,n}$ and, as mentioned by a reviewer, may not be without ambiguity under serial dependence. Finally, note that for $t_5$ innovations and $h=e$, the moments conditions on the kernel in Proposition~\ref{prop:multUn} are not satisfied. Still, the finite-sample behavior of~$\mathrm{CI}_{2,n}$ relatively to that of~$\mathrm{CI}_{1,n}$ does not seem affected suggesting that Proposition~\ref{prop:multUn} might hold under weaker conditions.

Finally, the presented application also highlights the fact that multiplier bootstrap procedures can often be implemented to be computationally efficient. In the setting under consideration, the sample $\hat h_{1,1:n}(\vec X_1),\dots,\hat h_{1,1:n}(\vec X_n)$ required for computing~\eqref{eq:hatUnm} needs to be computed only once. The computational cost for obtaining the multiplier replicates $\hat \U_n^{(1)}(1),\dots,\hat \U_n^{(M)}(1)$ then essentially boils down to that of the generation of the required $M$ dependent multiplier sequences. The latter seems very reasonable when based on the ``moving average approach'' of \citet{Buh93} discussed above.

\section{Two dependent multiplier bootstraps for $\D_n$} 
\label{sec:multDn}

\subsection{Dependent multiplier results for $\D_n$}

Results analogous to Propositions~\ref{prop:weakUn} and~\ref{prop:multUn} can be obtained for the process $\D_n$ in~\eqref{eq:Dn}. As we shall see, they have immediate applications to change-point detection. The starting point for deriving such results is to note that, for any $s \in [2/n,1-2/n]$, $\D_n$ can be rewritten as
\begin{equation}
\label{eq:DnH0}
\D_n(s) = \lambda_n(s,1) \U_n(s) -  \lambda_n(0,s) \U_n^*(s),
\end{equation}
where $\U_n$ is defined in~\eqref{eq:Un} and where, for any $s \in [0,1]$,
\begin{equation}
\label{eq:Un*}
\U_n^*(s) = \left\{ 
\begin{array}{ll}
\sqrt{n} \lambda_n(s,1) \{ U_{h, \ip{ns}+1:n} - \theta \},
&\mbox{if } s \in [0,1-2/n], \\
0, &\mbox{otherwise}.
\end{array} 
\right.
\end{equation}
The following proposition extends Theorem~1.1 of \cite{GomHor02} to mixing sequences. Its proof is given in Appendix~\ref{proofs:Dn}. 

\begin{prop}[Asymptotics of $\D_n$]
\label{prop:weakDn}
Under the conditions of Proposition~\ref{prop:weakUn}, $\D_n \leadsto \D$ in $\ell^\infty([0,1])$, where $\D(s) =2\sigma_{h_1} \{ \B(s) - s \B(1) \}$, $s \in [0,1]$, with $\B$ a standard Brownian motion and $\sigma_{h_1}$ given in~\eqref{eq:sigma_h1}.
\end{prop}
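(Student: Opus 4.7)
The identity \eqref{eq:DnH0} reduces the problem to a joint weak convergence statement for $(\U_n, \U_n^*)$. The plan is to reduce both processes to partial sums of $h_1(\vec X_i)$ via Hoeffding's decomposition, invoke the functional CLT for partial sums under mixing (Theorem~2 of \cite{OodYos72}, as already used to prove Proposition~\ref{prop:weakUn}), and then apply the continuous mapping theorem.

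First, I would establish an asymptotic representation for $\U_n^*$ analogous to \eqref{eq:asym_equiv_Un}. Applying Hoeffding's decomposition \eqref{eq:Hoeff_decomp} to the tail $U_{h,\ip{ns}+1:n}$ and plugging the result into the definition \eqref{eq:Un*} yields, for $s \in [0,1-2/n]$,
\[
\U_n^*(s) = \frac{2}{\sqrt{n}} \sum_{i=\ip{ns}+1}^{n} h_1(\vec X_i) + \sqrt{n}\, \lambda_n(s,1)\, U_{h_2, \ip{ns}+1:n}.
\]
The supremum over $s$ of the second term is controlled by exactly the same maximal inequalities for degenerate $U$-statistics from \cite{DehWen10a,DehWen10b} that drive the proof of Proposition~\ref{prop:weakUn} for $\U_n$; applied to all backward partial sums rather than forward ones, they give
\begin{equation*}
\sup_{s \in [0,1]} \bigg| \U_n^*(s) - \frac{2}{\sqrt{n}} \sum_{i=\ip{ns}+1}^{n} h_1(\vec X_i) \bigg| = o_\Pr(1),
\end{equation*}
the boundary strip $s \in (1-2/n, 1]$ being trivially negligible.

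Writing $P_n(s) = 2 n^{-1/2} \sum_{i=1}^{\ip{ns}} h_1(\vec X_i)$, and combining the above display with \eqref{eq:asym_equiv_Un} and the decomposition \eqref{eq:DnH0}, I obtain after a brief algebraic manipulation using $\lambda_n(s,1) + \lambda_n(0,s) = \lambda_n(0,1) = 1$ that
\[
\sup_{s \in [0,1]} \big| \D_n(s) - \{ P_n(s) - \lambda_n(0,s)\, P_n(1) \} \big| = o_\Pr(1).
\]
Since $\sup_{s}|\lambda_n(0,s) - s| \le 1/n$ and $P_n(1) = O_\Pr(1)$ by \eqref{eq:sigma_h1}, the factor $\lambda_n(0,s)$ can be replaced by $s$ at no cost, yielding $\sup_s | \D_n(s) - \{ P_n(s) - s\, P_n(1) \} | = o_\Pr(1)$.

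Finally, Theorem~2 of \cite{OodYos72} applied under the mixing and moment assumptions of Proposition~\ref{prop:weakUn} (exactly as in the proof of that proposition) gives $P_n \leadsto 2\sigma_{h_1} \B$ in $\ell^\infty([0,1])$. The map $\ell^\infty([0,1]) \to \ell^\infty([0,1])$, $x \mapsto x(\cdot) - (\cdot)\, x(1)$, is continuous, so the continuous mapping theorem together with Slutsky's lemma yields $\D_n \leadsto 2\sigma_{h_1}\{\B(\cdot) - (\cdot)\B(1)\}$, which is the claimed limit.

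The main technical point is the uniform reduction of $\U_n^*$ to a backward partial sum, but this is a straightforward adaptation of the arguments already used for $\U_n$: the only role of the direction of summation is in the indexing of the maximal inequalities for $U_{h_2}$, which are symmetric in that respect. Everything else is continuous mapping and bookkeeping with the floor function $\ip{ns}$.
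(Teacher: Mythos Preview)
Your proof is correct and follows essentially the same route as the paper's: reduce $\U_n^*$ to the backward partial-sum process via Hoeffding's decomposition and negligibility of the degenerate remainder, combine this with \eqref{eq:asym_equiv_Un} and \eqref{eq:DnH0}, and conclude via the functional CLT of \cite{OodYos72} and the continuous mapping theorem. The only cosmetic difference is that the paper handles the backward degenerate term $n^{-1/2}\max_{2\le k\le n} k|U_{h_2,n-k+1:n}|$ by pointing to the moment/chaining arguments of Lemmas~\ref{lem:Wn_p_0} and~\ref{lem:increments_Wn} rather than by invoking the forward maximal inequality from the proof of Proposition~\ref{prop:weakUn} via stationarity, but both routes deliver the same $o_\Pr(1)$ bound.
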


To obtain dependent multiplier bootstrap results for $\D_n$ in the spirit of those obtained in the previous section, we start again from~\eqref{eq:DnH0}. Since $\U_n$ can be resampled using the processes $\hat \U_n^{(m)}$ or $\check \U_n^{(m)}$, $m \in \{1,\dots,M\}$, defined in~\eqref{eq:hatUnm} and~\eqref{eq:checkUnm}, respectively, it suffices to define the corresponding bootstrap replicates for the process $\U_n^*$ in~\eqref{eq:Un*} to obtain bootstrap replicates of $\D_n$. Thus, for any $m \in \{1,\dots,M\}$, let
$$
\hat \U_n^{*,(m)} (s) = \frac{2}{\sqrt{n}} \sum_{i=\ip{ns}+1}^n \xi_{i,n}^{(m)} \hat h_{1,1:n}(\vec X_i), \qquad s \in [0,1],
$$
and 
\begin{equation}
\label{eq:checkUn*m}
\check \U_n^{*,(m)} (s) = \frac{2}{\sqrt{n}} \sum_{i=\ip{ns}+1}^n \xi_{i,n}^{(m)} \hat h_{1,\ip{ns}+1:n}(\vec X_i), \qquad s \in [0,1],
\end{equation}
where, for any integers $1 \leq k \leq l \leq n$, $\hat h_{1,k:l}$ is defined in~\eqref{eq:hath1kl}. Corresponding dependent multiplier bootstrap replicates of $\D_n$ are then naturally given by
\begin{equation}
\label{eq:hatDnm}
\hat \D_n^{(m)} (s) = \lambda_n(s,1) \hat \U_n^{(m)} (s) - \lambda_n(0,s) \hat \U_n^{*,(m)} (s),
\end{equation}
and
\begin{equation}
\label{eq:checkDnm}
\check \D_n^{(m)} (s) = \lambda_n(s,1) \check \U_n^{(m)} (s) - \lambda_n(0,s) \check \U_n^{*,(m)} (s).
\end{equation}

A proof of the following result is given in Appendix~\ref{proofs:Dn}.

\begin{prop}[Two dependent multiplier bootstraps for $\D_n$]
\label{prop:multDn}
Under the conditions of Proposition~\ref{prop:multUn},
$$
\left(\D_n,\hat \D_n^{(1)},\dots,\hat \D_n^{(M)},\check \D_n^{(1)},\dots,\check \D_n^{(M)} \right) \leadsto \left(\D,\D^{(1)},\dots,\D^{(M)},\D^{(1)},\dots,\D^{(M)} \right)
$$
in $\{\ell^\infty([0,1])\}^{2M+1}$,  where $\D$ is the weak limit of $\D_n$ given in Proposition~\ref{prop:weakDn}, and $\D^{(1)},\dots,\D^{(M)}$ are independent copies of $\D$.
\end{prop}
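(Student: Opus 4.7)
The plan is to exploit the additive decomposition \eqref{eq:DnH0}, $\D_n(s) = \lambda_n(s,1) \U_n(s) - \lambda_n(0,s) \U_n^*(s)$, together with the completely analogous decompositions \eqref{eq:hatDnm} and \eqref{eq:checkDnm} of $\hat \D_n^{(m)}$ and $\check \D_n^{(m)}$. Since $\lambda_n(s,1) \to 1-s$ and $\lambda_n(0,s) \to s$ uniformly on $[0,1]$, the continuous mapping theorem reduces the proof to establishing the joint weak convergence of the $4M+2$ processes
$$\left(\U_n,\U_n^*, \hat \U_n^{(m)}, \hat \U_n^{*,(m)}, \check \U_n^{(m)}, \check \U_n^{*,(m)} \right)_{m=1,\dots,M}$$
in $\{\ell^\infty([0,1])\}^{4M+2}$, towards a vector whose non-star entries are distributed as $(\U, \U^{(m)}, \U^{(m)})_m$ as in Proposition~\ref{prop:multUn} and whose star entries are $\U^*(s) = 2\sigma_{h_1}\{\B(1) - \B(s)\}$ and the corresponding $\U^{*,(m)}$ built from the same underlying Brownian motions. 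Plugging $(\U, \U^*)$ and each $(\U^{(m)}, \U^{*,(m)})$ into the continuous linear map $\ell^\infty([0,1])^2 \to \ell^\infty([0,1])$ defined by $(F,G) \mapsto \{s \mapsto (1-s)F(s) - s\, G(s)\}$ then reproduces $\D$ and independent copies $\D^{(m)}$ as required by Proposition~\ref{prop:weakDn}.

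The first step is to establish star analogues of the asymptotic equivalences \eqref{eq:asym_equiv_hatUnm}--\eqref{eq:asym_equiv_checkUnm}, namely
\begin{align*}
\sup_{s \in [0,1]} \left| \hat \U_n^{*,(m)}(s) - \frac{2}{\sqrt{n}} \sum_{i=\ip{ns}+1}^n \xi_{i,n}^{(m)} h_1(\vec X_i) \right| &= o_\Pr(1), \\
\sup_{s \in [0,1]} \left| \check \U_n^{*,(m)}(s) - \frac{2}{\sqrt{n}} \sum_{i=\ip{ns}+1}^n \xi_{i,n}^{(m)} h_1(\vec X_i) \right| &= o_\Pr(1).
\end{align*}
The hat-star version is immediate: writing $\sum_{i=\ip{ns}+1}^n = \sum_{i=1}^n - \sum_{i=1}^{\ip{ns}}$ on both sides and using the fact that $\hat h_{1,1:n}$ does not depend on $s$ reduces it to \eqref{eq:asym_equiv_hatUnm} evaluated at $s=1$ and at $s$. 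The check-star version is more delicate because $\hat h_{1,\ip{ns}+1:n}$ varies with $s$. Here I would invoke a time-reversal reduction: the sequence $(\vec X_{n+1-i})_{i=1,\dots,n}$ inherits the $\alpha$- and $\beta$-mixing coefficients and moment properties of the original sample, and $(\xi_{n+1-i,n}^{(m)})_{i \in \N}$ remains a dependent multiplier sequence in the sense of Definition~\ref{defn:mult_seq} (by stationarity of $\xi$ and symmetry of $\varphi$). Consequently the check-star process at $s$ equals, up to a boundary shift that is uniformly negligible, the check process \eqref{eq:checkUnm} applied to the reversed sample at $1-s$, and the proof of \eqref{eq:asym_equiv_checkUnm} given in Appendix~\ref{proof:prop:multUn} transfers verbatim.

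With these equivalences in hand, the problem collapses to the joint weak convergence of $(\U_n, \U_n^*, \Hb_n^{(m)}, \Hb_n^{*,(m)})_m$, where $\Hb_n^{(m)}$ is as in \eqref{eq:H_Hm} and $\Hb_n^{*,(m)}(s) = 2n^{-1/2} \sum_{i=\ip{ns}+1}^n \xi_{i,n}^{(m)} h_1(\vec X_i)$. This follows from the functional dependent multiplier central limit theorem (Proposition~\ref{prop:func_mult}) applied to the partial-sum process of $(h_1(\vec X_i))_i$ jointly on the two halves $[0,s]$ and $(s,1]$; the independence across $m$ of the multiplier sequences yields independence of the bootstrap limits, and the covariance structure identified in Proposition~\ref{prop:weakUn} (notice that $\sigma_{h_1}^2$ is the long-run variance of $h_1(\vec X_i)$) pins down the joint law. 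The main obstacle, as in Proposition~\ref{prop:multUn}, is the check-star equivalence: the estimator $\hat h_{1,\ip{ns}+1:n}(\vec X_i)$ is built on a subsample of size $n-\ip{ns}$ that shrinks as $s\to 1$, so uniform control in $s$ is delicate. The time-reversal argument above is the cleanest way to recycle the Appendix~\ref{proof:prop:multUn} machinery rather than rerunning its heaviest estimates in a transposed setting.
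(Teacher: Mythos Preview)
Your approach is correct and matches the paper's: reduce each of the star-processes to the partial-sum multiplier process in $h_1(\vec X_i)$, then combine Proposition~\ref{prop:func_mult} with the continuous mapping theorem and the asymptotic equivalence~\eqref{eq:asym_equiv_Dn} for $\D_n$ itself. Where the paper simply says the check-star reduction follows by ``adapting the arguments'' of Appendix~\ref{proof:prop:multUn} (via an intermediate process $\tilde\U_n^{*,(m)}$), your time-reversal device is a clean way to make that adaptation explicit without rerunning the estimates of Lemmas~\ref{lem:Wn_p_0}--\ref{lem:increments_Wn}; your sum-splitting observation for the hat-star case is likewise a shortcut the paper leaves implicit. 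One small point: when you write that the joint convergence of $(\U_n,\U_n^*,\Hb_n^{(m)},\Hb_n^{*,(m)})_m$ ``follows from Proposition~\ref{prop:func_mult}'', you are tacitly using~\eqref{eq:asym_equiv_Un} and its star analogue from the proof of Proposition~\ref{prop:weakDn} to replace $\U_n,\U_n^*$ by $\Hb_n,\Hb_n^*$ first---worth stating explicitly.
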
 

\subsection{Applications to change-point detection} \label{subsec:cp}

The previous result is of immediate interest in the context of tests for change-point detection. Recall that the aim of such statistical procedures is to test
\begin{equation}
\label{H0}
  H_0 : \,\exists \, F \text{ such that } 
  \vec X_1, \ldots, \vec X_n \text{ have c.d.f. } F
\end{equation}
against alternatives involving the non-constancy of the c.d.f.\ \citep[see, e.g.,][for an overview of possible approaches]{CsoHor97}. As already mentioned, a typical test statistic is $S_n$ in~\eqref{eq:Sn}. To fix ideas, we consider again the kernels defined in~\eqref{eq:ker_ef} and~\eqref{eq:ker_g}. Choosing $h=e$ (resp. $h=f$) results in tests for $H_0$ that are particularly sensitive to changes in the variance (resp.\ Gini's mean difference) of the observations. The choice $h=g$ leads to tests for $H_0$ particularly sensitive to changes in the cross-correlation of multivariate time series as measured by Kendall's tau. Such tests were studied by \cite{QueSaiFav13} in the case of serially independent observations and, more recently, by \cite{DehVogWenWie14} in the case of {\em $\Pr$-near epoch dependent} sequences.

The usual way of carrying out tests based on $S_n$ is to exploit the fact that, under $H_0$ and for instance the conditions of Proposition~\ref{prop:weakUn}, $S_n$ converges weakly to $S = 2 \sigma_{h_1} \sup_{s \in [0,1]} | \B(s) - s \B(1)|$, where $\B$ is a standard Brownian motion. In other words, $S_n \sigma_{h_1}^{-1} / 2$ converges weakly to the supremum of a Brownian bridge, which implies that its limiting distribution is the Kolmogorov distribution. The c.d.f.\ $F_K$ of the latter distribution can be approximated very well numerically. In the setting considered in this work, it is thus natural to compute an approximate p-value for $S_n$ as 
\begin{equation}
\label{eq:asym_pval}
1 - F_K(S_n \hat \sigma_{\hat h_{1,1:n}}^{-1} / 2),
\end{equation} 
where, again, $\hat \sigma_{\scriptscriptstyle \hat h_{1,1:n}}^{\scriptscriptstyle 2}$ is the estimator of $\sigma_{h_1}^2$ obtained from~\eqref{eq:sigma_h1n} by replacing $h_1(X_i)$ by $\hat h_{1,1:n}(X_i)$ as defined in~\eqref{eq:hath1kl}, and in which the bandwidth parameter $\ell_n$ is estimated by $\hat \ell_n^{opt}$ (see Section~\ref{sec:estim-bandw-param}). As mentioned by a reviewer, the latter choice for $\ell_n$ is optimal only in the context of the estimation of the long-run variance, and a test-optimal bandwidth \citep[see][]{SunPhiJin08} could be investigated in future work.

An alternative way to carry out the test consists of resampling $S_n$. For any $m \in \{1,\dots,M\}$, let
$$
\hat S_n^{(m)} = \sup_{s \in [0,1]} |\hat \D_n^{(m)} (s) | \qquad \mbox{and} \qquad \check S_n^{(m)} = \sup_{s \in [0,1]} |\check \D_n^{(m)} (s) |,
$$
where $\hat \D_n^{(m)}$ and $\check \D_n^{(m)}$ are defined in~\eqref{eq:hatDnm} and~\eqref{eq:checkDnm}, respectively. From Proposition~\ref{prop:multDn} and the continuous mapping theorem, we then immediately have that, under $H_0$ and the conditions of Proposition~\ref{prop:multUn},
\begin{equation}
\label{eq:multSn}
\left(S_n,\hat S_n^{(1)},\dots,\hat S_n^{(M)},\check S_n^{(1)},\dots,\check S_n^{(M)} \right) \leadsto \left(S,S^{(1)},\dots,S^{(M)},S^{(1)},\dots,S^{(M)} \right)
\end{equation}
in $\R^{2M+1}$, where $S$ is the weak limit of $S_n$, and $S^{(1)},\dots,S^{(M)}$ are independent copies of~$S$. The previous result suggests computing an approximate p-value for $S_n$ as
\begin{equation}
\label{eq:pval}
\frac{1}{M} \sum_{m=1}^M \1 \left( \hat S_n^{(m)} \geq S_n \right) \quad \mbox{or as} \quad 
\frac{1}{M} \sum_{m=1}^M \1 \left( \check S_n^{(m)} \geq S_n \right).
\end{equation}
The weak convergence in~\eqref{eq:multSn} can be combined with Proposition F.1 in \cite{BucKoj14} to show that a test based on $S_n$ whose p-value is computed as in~\eqref{eq:pval} will hold its level asymptotically as $n \to \infty$ followed by $M \to \infty$.

\subsection{Monte Carlo experiments}
\label{sec:monte-carlo-exper}

To illustrate the previous developments, we restrict our attention to the choice $h=g$ and $d=2$, that is, to tests for change-point detection for bivariate data that are particularly sensitive to changes in Kendall's tau. The aim is to compare the two ways for carrying out the test discussed previously for samples of moderate size. When the approximate p-value for $S_n$ in~\eqref{eq:Sn} is computed using~\eqref{eq:asym_pval}, we shall talk about {\em the test based on $S_n^{\hat \sigma}$}, while when it is based on~\eqref{eq:pval}, we shall talk about {\em the test based on $\hat S_n$} or {\em the test based on $\check S_n$}.

Two simple time series models were used to generate bivariate samples of size $n$. Given a real $t \in (0,1)$ determining the location of the possible change-point of the innovations, two bivariate copulas $C_1$ and $C_2$, and parameters $\zeta$, $\vec \omega, \vec \beta, \vec \alpha$ to be specified below, the following steps were followed to generate a bivariate sample $\vec X_1,\dots,\vec X_n$:
\begin{enumerate}
\item generate independent bivariate random vectors $\vec U_i$, $i \in \{-100,\dots,0,\dots,n\}$ such that $\vec U_i$, $i \in \{-100,\dots,0,\dots,\ip{nt}\}$ are i.i.d.\ from copula $C_1$ and $\vec U_i$, $i \in \{\ip{nt}+1,\dots,n\}$ are i.i.d.\ from copula $C_2$,
\item compute $\vec \epsilon_i = (\Phi^{-1}(U_{i1}),\Phi^{-1}(U_{i2}))$, where $\Phi$ is the c.d.f.\ of the standard normal distribution, 
\item set $\vec X_{-100} = \vec \epsilon_{-100}$ and, for $j = 1,2$, compute recursively either
\begin{equation}
\tag{AR1}
\label{eq:ar1}
  X_{ij} = \zeta X_{i-1,j} + \epsilon_{ij}, \qquad i=-99,\dots,0,\dots,n,
\end{equation} 
or
\begin{equation}
\tag{GARCH}
\label{eq:garch}
\sigma_{ij}^2 = \omega_j + \beta_j \sigma_{i-1,j}^2 + \alpha_j \epsilon_{i-1,j}^2, \qquad X_{ij} = \sigma_{ij} \epsilon_{ij}, \qquad i=-99,\dots,0,\dots,n.
\end{equation}
\end{enumerate}
If the copulas $C_1$ and $C_2$ are chosen equal, the above procedure generates samples under~$H_0$ defined in~\eqref{H0}. Three possible values were considered for the parameter $\zeta$ controlling the strength of the serial dependence in~\eqref{eq:ar1}: 0 (serial independence), 0.25 (weak serial dependence), 0.5 (moderate serial dependence). For model~\eqref{eq:garch}, following \cite{BucRup13}, we took $(\omega_1,\beta_1,\alpha_1)=(0.012,0.919,0.072)$ and $(\omega_2,\beta_2,\alpha_2)=(0.037,0.868,0.115)$. The latter values were estimated by \cite{JonPooRoc07} from SP500 and DAX daily logreturns, respectively. 

Two one-parameter copula families were considered: the Clayton (which is {\em upper-tail dependent}) and the Gumbel--Hougaard (which is {\em lower-tail dependent}) \cite[see, e.g.,][]{Nel06}. For both families, there exists a one-to-one relationship between the parameter value and Kendall's tau. To estimate the power of the tests, 1,000 samples were generated under each combination of factors and all the tests were carried out at the 5\% significance level. For the tests based on $\hat S_n$ or $\check S_n$, $M=1,000$ multiplier replications were used. The corresponding dependent multiplier sequences were generated as explained in Section~\ref{sec:confint_theta} with $\ell_n = \hat \ell_n^{opt}$. For the test based on $S_n^{\hat \sigma}$, as classically done, we approached the c.d.f.\ $F_k$ in~\eqref{eq:asym_pval} by that of the statistic of the classical Kolmogorov--Smirnov goodness-of-fit test for a simple hypothesis. From a practical perspective, we used the function {\tt pkolmogorov1x} given in the code of the \textsf{R} function {\tt ks.test}.

\begin{table}[t!]
\centering
\caption{Percentage of rejection of $H_0$ computed from 1,000 samples of size $n \in \{50, 100, 200\}$ when $C_1 = C_2 = C$ is either the bivariate Clayton (Cl) or the Gumbel--Hougaard (GH) copula with a Kendall's tau of $\tau$.} 
\label{resH0}
{\small
\begin{tabular}{lrrrrrrrrrr}
  \hline
  \multicolumn{3}{c}{} & \multicolumn{2}{c}{$\zeta=0$} & \multicolumn{2}{c}{$\zeta=0.25$} & \multicolumn{2}{c}{$\zeta=0.5$} & \multicolumn{2}{c}{GARCH} \\ \cmidrule(lr){4-5} \cmidrule(lr){6-7} \cmidrule(lr){8-9} \cmidrule(lr){10-11} $C$ & $n$ & $\tau$ & $\check S_n$ & $S_n^{\hat \sigma}$ & $\check S_n$ & $S_n^{\hat \sigma}$ & $\check S_n$ & $S_n^{\hat \sigma}$ & $\check S_n$ & $S_n^{\hat \sigma}$ \\ \hline
Cl & 50 & 0.1 & 6.0 & 5.6 & 5.3 & 6.1 & 11.7 & 8.4 & 6.2 & 6.0 \\ 
   &  & 0.3 & 5.3 & 5.3 & 6.2 & 7.2 & 8.5 & 6.8 & 5.8 & 6.0 \\ 
   &  & 0.5 & 4.3 & 7.9 & 5.7 & 10.7 & 6.7 & 11.5 & 4.4 & 8.7 \\ 
   &  & 0.7 & 4.7 & 16.4 & 4.1 & 17.4 & 8.4 & 16.3 & 5.6 & 16.2 \\ 
   & 100 & 0.1 & 6.2 & 5.5 & 5.3 & 4.4 & 7.8 & 5.1 & 6.0 & 4.6 \\ 
   &  & 0.3 & 5.6 & 5.6 & 6.1 & 5.8 & 8.1 & 6.6 & 4.2 & 4.2 \\ 
   &  & 0.5 & 4.3 & 4.7 & 5.3 & 6.1 & 6.3 & 6.7 & 4.9 & 5.9 \\ 
   &  & 0.7 & 2.5 & 10.4 & 2.2 & 8.4 & 4.9 & 9.7 & 2.7 & 9.4 \\ 
   & 200 & 0.1 & 6.4 & 5.0 & 6.5 & 5.3 & 5.5 & 3.8 & 5.8 & 5.1 \\ 
   &  & 0.3 & 4.9 & 4.4 & 5.0 & 4.4 & 7.2 & 5.3 & 7.5 & 6.2 \\ 
   &  & 0.5 & 4.8 & 4.7 & 5.0 & 5.5 & 6.3 & 5.8 & 6.2 & 6.0 \\ 
   &  & 0.7 & 3.7 & 5.9 & 3.8 & 5.8 & 5.5 & 7.0 & 4.2 & 5.9 \\ 
  GH & 50 & 0.1 & 5.4 & 4.9 & 7.3 & 6.1 & 9.7 & 7.0 & 6.5 & 5.1 \\ 
   &  & 0.3 & 5.8 & 5.8 & 6.3 & 6.7 & 7.7 & 8.1 & 6.0 & 6.7 \\ 
   &  & 0.5 & 4.8 & 8.8 & 5.2 & 8.5 & 8.5 & 11.3 & 6.4 & 10.6 \\ 
   &  & 0.7 & 6.3 & 20.6 & 5.7 & 20.2 & 6.8 & 18.9 & 5.0 & 18.7 \\ 
   & 100 & 0.1 & 5.0 & 4.1 & 6.5 & 5.3 & 8.9 & 6.1 & 5.6 & 4.8 \\ 
   &  & 0.3 & 4.8 & 4.4 & 6.1 & 5.8 & 7.7 & 6.9 & 5.5 & 4.8 \\ 
   &  & 0.5 & 4.4 & 4.9 & 4.2 & 5.7 & 7.1 & 8.8 & 4.1 & 5.3 \\ 
   &  & 0.7 & 3.3 & 10.6 & 3.0 & 9.3 & 4.3 & 11.0 & 3.8 & 9.0 \\ 
   & 200 & 0.1 & 5.7 & 5.1 & 5.7 & 4.4 & 7.0 & 5.2 & 6.1 & 4.5 \\ 
   &  & 0.3 & 5.8 & 5.4 & 4.8 & 4.4 & 6.6 & 5.9 & 6.7 & 5.8 \\ 
   &  & 0.5 & 3.7 & 4.9 & 4.8 & 4.6 & 7.5 & 6.6 & 5.0 & 5.4 \\ 
   &  & 0.7 & 2.8 & 6.1 & 3.9 & 6.6 & 4.9 & 7.8 & 3.3 & 5.4 \\ 
   \hline
\end{tabular}
}
\end{table}

Table~\ref{resH0} reports the rejection percentages of $H_0$ for observations generated under the null. To ease reading, the rejection rates of the test based on $\hat S_n$ are not reported as the latter turned out, overall, to be worse behaved than the test based on $\check S_n$ for the sample sizes under consideration. As one can see, the test based on  $S_n^{\hat \sigma}$ tends to be way too liberal when the cross-sectional dependence is high ($\tau \geq 0.5$), although its behavior improves as $n$ increases. The empirical levels of the test based on $\check S_n$ are overall reasonably good when $\zeta \in \{0,0.25\}$ and for sequences generated using~\eqref{eq:garch}, even for small sample sizes. However, under stronger serial dependence corresponding to $\zeta=0.5$ in~\eqref{eq:ar1}, the test is overall too liberal although the agreement with the 5\% nominal level improves as $n$ increases.

\begin{table}[t!]
\centering
\caption{Percentage of rejection of $H_0$ computed from 1,000 samples of size $n \in \{50, 100, 200\}$ when $C_1$ and $C_2$ are both bivariate Gumbel--Hougaard copulas such that $C_1$ has a Kendall's tau of 0.2 and $C_2$ has a Kendall's tau of $\tau$.} 
\label{resH1}
{\small
\begin{tabular}{rrrrrrrrrrr}
  \hline
  \multicolumn{3}{c}{} & \multicolumn{2}{c}{$\zeta=0$} & \multicolumn{2}{c}{$\zeta=0.25$} & \multicolumn{2}{c}{$\zeta=0.5$} & \multicolumn{2}{c}{GARCH}  \\ \cmidrule(lr){4-5} \cmidrule(lr){6-7} \cmidrule(lr){8-9} \cmidrule(lr){10-11} $n$ & $t$ & $\tau$ & $\check S_n$ & $S_n^{\hat \sigma}$ & $\check S_n$ & $S_n^{\hat \sigma}$ & $\check S_n$ & $S_n^{\hat \sigma}$ & $\check S_n$ & $S_n^{\hat \sigma}$ \\ \hline
50 & 0.10 & 0.4 & 5.2 & 7.3 & 7.1 & 9.3 & 9.4 & 11.9 & 6.7 & 8.6 \\ 
   &  & 0.6 & 9.7 & 17.2 & 10.5 & 19.9 & 12.2 & 21.7 & 9.3 & 16.5 \\ 
   & 0.25 & 0.4 & 11.7 & 11.1 & 11.5 & 10.8 & 12.4 & 13.3 & 9.6 & 9.6 \\ 
   &  & 0.6 & 30.7 & 29.2 & 29.6 & 25.7 & 29.2 & 24.2 & 29.7 & 27.0 \\ 
   & 0.50 & 0.4 & 14.7 & 13.9 & 15.6 & 14.4 & 15.9 & 12.4 & 12.1 & 11.3 \\ 
   &  & 0.6 & 48.2 & 37.0 & 47.6 & 36.4 & 41.9 & 27.2 & 43.4 & 35.3 \\ 
  100 & 0.10 & 0.4 & 7.4 & 8.3 & 6.0 & 6.2 & 9.2 & 8.9 & 5.8 & 6.1 \\ 
   &  & 0.6 & 13.3 & 18.2 & 14.3 & 18.8 & 14.8 & 18.1 & 14.3 & 18.2 \\ 
   & 0.25 & 0.4 & 15.9 & 14.1 & 16.5 & 16.0 & 19.2 & 14.5 & 14.7 & 13.8 \\ 
   &  & 0.6 & 60.4 & 52.3 & 60.3 & 51.6 & 49.4 & 37.6 & 59.3 & 51.6 \\ 
   & 0.50 & 0.4 & 26.8 & 21.8 & 23.8 & 20.5 & 24.8 & 17.8 & 26.5 & 22.5 \\ 
   &  & 0.6 & 85.4 & 77.9 & 79.6 & 69.8 & 72.2 & 55.4 & 83.3 & 72.7 \\ 
  200 & 0.10 & 0.4 & 8.2 & 8.3 & 8.3 & 8.4 & 11.1 & 8.7 & 9.0 & 7.9 \\ 
   &  & 0.6 & 35.3 & 36.8 & 33.7 & 36.4 & 21.7 & 23.1 & 32.4 & 34.0 \\ 
   & 0.25 & 0.4 & 33.9 & 31.9 & 31.3 & 28.9 & 27.3 & 21.1 & 27.0 & 26.0 \\ 
   &  & 0.6 & 93.5 & 83.9 & 91.0 & 79.9 & 81.0 & 62.6 & 91.4 & 82.1 \\ 
   & 0.50 & 0.4 & 48.2 & 44.1 & 49.1 & 45.7 & 41.2 & 31.9 & 47.3 & 43.8 \\ 
   &  & 0.6 & 99.1 & 97.1 & 98.3 & 94.3 & 95.1 & 86.4 & 99.3 & 95.8 \\ 
   \hline
\end{tabular}
}
\end{table}

Table~\ref{resH1} reports the rejection percentages of $H_0$ for bivariate sequences generated with a break in the innovations whose position is determined by the parameter $t$. The results are those obtained when $C_1$ and $C_2$ are Gumbel--Hougaard copulas with different Kendall's taus. Similar results (not reported) were obtained when $C_1$ and $C_2$ are Clayton copulas instead. As one can see, the test based on $\check S_n$ seems overall more powerful except when the change in the innovations occurs early ($t=0.1$). Of course, when analyzing these results, one should keep in mind that the test based on $S_n^{\hat \sigma}$ was observed to be too liberal in the case of strong cross-sectional dependence, and that both tests displayed inflated empirical levels, overall, for $\zeta = 0.5$. 

\section{Conclusion} \label{sec:con}

Starting from the work of \cite{GomHor02} and \cite{DehWen10b}, we have studied the asymptotic behavior of sequential resampling schemes for the processes $\U_n$ and $\D_n$ defined in~\eqref{eq:Un} and~\eqref{eq:Dn}, respectively. Monte Carlo experiments indicate that the use of the derived {\em dependent multiplier bootstraps} can have advantages over that of estimated asymptotic distributions in the context of confidence interval construction or tests for change-point detection. Future work could consist of studying resampling schemes for estimators of the variance $\sigma_{h_1}^2$ in~\eqref{eq:sigma_h1} (with studentized confidence intervals in mind), or comparing dependent multiplier tests for change-point detection based on $S_n$ in~\eqref{eq:Sn} with their {\em self-normalization} version proposed in \cite{ShaZha10}.

\appendix

\section{Proof of Proposition~\ref{prop:weakUn}}
\label{proof:prop:weakUn}

Throughout this and the following proofs, we will frequently apply results from \cite{DehWen10a,DehWen10b}. The latter are stated for $d=1$ only, but actually hold true for $d>1$ as explained in \citet[Appendix B]{DehVogWenWie14}.

\begin{proof}[\bf Proof of Proof of Proposition~\ref{prop:weakUn}.]
Since $b > (2+\delta)/\delta$, Theorem~2 of \cite{OodYos72} implies that $\sigma_{h_1}^2 < \infty$ and that the process $s \mapsto n^{-1/2}  \sum_{i=1}^{\ip{ns}}  h_1(\vec X_i)$ converges weakly to $\sigma_{h_1} \B$ in $\ell^\infty([0,1])$. To show the desired result, it therefore suffices to show that $\sup_{s \in [0,1]} | \U_n(s) - 2 n^{-1/2}  \sum_{i=1}^{\ip{ns}}  h_1(\vec X_i) | = o_\Pr(1)$. Since $\U_n(s) = 0$ if $s \in [0,2/n)$ and since $2 n^{-1/2} h_1(\vec X_1) = o_\Pr(1)$, we immediately obtain that $\sup_{s \in [0,2/n)} | \U_n(s) - 2 n^{-1/2}  \sum_{i=1}^{\ip{ns}}  h_1(\vec X_i) | = o_\Pr(1)$.

For any $s \in [2/n,1]$, using Hoeffding's decomposition~\eqref{eq:Hoeff_decomp}, we obtain that
$$
\U_n(s) = \frac{2}{\sqrt{n}}  \sum_{i=1}^{\ip{ns}}  h_1(\vec X_i) + \sqrt{n} \lambda_n(0,s) U_{h_2,1:\ip{ns}}.
$$
Hence, 
$$
\sup_{s \in [2/n,1]} \left| \U_n(s) - \frac{2}{\sqrt{n}}  \sum_{i=1}^{\ip{ns}}  h_1(\vec X_i) \right| = n^{-1/2} \max_{2 \leq k \leq n} k |U_{h_2,1:k}|.
$$
It remains to show that the latter supremum converges in probability to zero. Under~(i), let $a = \delta b / (2 + \delta)$ and notice that $a > 1$ from the assumption on the mixing rate. Furthermore, if $-1 < 1-a < 0$, from well-known results on Riemann series, $\sum_{r=1}^n r \beta_r^{\delta/(2+\delta)} \le \mbox{const} \times \sum_{r=1}^n r^{1-a} = O(n^{2-a})$. Set $\tau = \max(2-a,0)$. Then, $\sum_{i=1}^n r \beta_r^{\delta/(2+\delta)} = O(n^\tau)$ and the conditions of Theorem~1 in \cite{DehWen10b} are satisfied. Let additionally $a_k = (\log k)^{3/2} \log \log k$, $k \geq 2$. Then, 
\begin{align*}
n^{-1/2} \max_{2 \leq k \leq n} k |U_{h_2,1:k}| &\leq n^{-1/2} \max_{2 \leq k \leq n} \frac{k^{1-\tau/2} |U_{h_2,1:k}|}{a_k} \times  \max_{2 \leq k \leq n} k^{\tau/2} a_k \\
&\leq \sup_{k \geq 2} \frac{k^{1-\tau/2} |U_{h_2,1:k}|}{a_k} \times  n^{\tau/2-1/2} a_n \as 0,
\end{align*}
since $\tau/2-1/2 < 0$ and since $\sup_{k \geq 2} k^{1-\tau/2} |U_{h_2,1:k}|/a_k < \infty$ with probability one as a consequence of Theorem~1 in \cite{DehWen10b}.

The proof under~(ii) is similar and follows by possibly letting $a = b \times 2 \gamma\delta / (3\gamma\delta + \delta + 5\gamma + 2)$.
\end{proof}

\section{A dependent multiplier central limit theorem}

Let $(Y_i)_{i \in \N}$ be a strictly stationary sequence of centered random variables. Furthermore, let $M > 0$ be a large integer, and let $(\xi_{i,n}^{(1)})_{i \in \N}$,\dots,$(\xi_{i,n}^{(M)})_{i \in \N}$ be independent copies of the same dependent multiplier sequence (see Definition~\ref{defn:mult_seq}). Then, for any $m \in \{1,\dots,M\}$ and $s \in [0,1]$, let 
$$
\Z_n(s) = \frac{1}{\sqrt{n}} \sum_{i=1}^{\ip{ns}} Y_i, \qquad  \Z_n^{(m)}(s) = \frac{1}{\sqrt{n}} \sum_{i=1}^{\ip{ns}}\xi_{i,n}^{(m)}  Y_i.
$$

\begin{prop}[Dependent multiplier central limit theorem]
\label{prop:func_mult}
Assume that $(Y_i)_{i \in \N}$ is a strictly stationary sequence of centered random variables with $(4+\delta)$-moments for some $\delta > 0$ and such that the strong mixing coefficients associated with $(Y_i)_{i \in \N}$ satisfy $\alpha_r = O(r^{-b})$, $b > 2(4+\delta)/\delta$. Then,
$$
\sigma^2 = \Ex(Y_0) + 2 \sum_{i=1}^\infty \Ex(Y_0Y_i) < \infty.
$$ 
Furthermore, let $(\xi_{i,n}^{(1)})_{i \in \N}$,\dots,$(\xi_{i,n}^{(M)})_{i \in \N}$ be independent copies of the same dependent multiplier sequence satisfying (M1)--(M3) in Definition~\ref{defn:mult_seq} such that $\ell_n = O(n^{1/2 - \eps})$ for some $1/(6+2\delta) < \eps < 1/2$. As a consequence, 
$$
( \Z_n, \Z_n^{(1)},\dots, \Z_n^{(M)} ) \leadsto ( \Z, \Z^{(1)},\dots,\Z^{(M)} )
$$
 in $\{\ell^\infty([0,1]) \}^{M+1}$, where $\Z = \sigma \B$ with $\B$ a standard Brownian motion, and where $\Z^{(1)},\dots,\Z^{(M)}$ are independent copies of $\Z$.
\end{prop}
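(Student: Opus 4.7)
The plan is to first handle the marginal convergence of $\Z_n$ separately, then establish the joint convergence of the multiplier processes by combining a blocking argument with a stochastic equicontinuity bound. Since the $M$ multiplier sequences are independent and independent of the data, once we prove the analogue for $M=1$ and identify the limit covariance, independence across $m$ and the block-diagonal form of the joint limit follow automatically.

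\textbf{Step 1 (Finiteness of $\sigma^2$ and convergence of $\Z_n$).} Davydov's covariance inequality gives $|\Ex(Y_0 Y_i)| \le C\, \alpha_i^{\delta/(4+\delta)} \|Y_0\|_{4+\delta}^2$, and the assumption $b > 2(4+\delta)/\delta$ implies $b\delta/(4+\delta) > 2 > 1$, so the series defining $\sigma^2$ is absolutely convergent. The invariance principle $\Z_n \leadsto \sigma \B$ for strongly mixing sequences (Herrndorf, 1984, or Rio, 2000, Theorem 4.1) applies a fortiori because the present conditions are stronger than $(2+\delta)$ moments with $b > (2+\delta)/\delta$.

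\textbf{Step 2 (Finite-dimensional convergence of the bootstrap replicates).} Fix $m$, a linear combination with weights $a_0, a_1, \dots, a_K$ and times $0 < t_1 < \dots < t_K \le 1$. Conditioning on the data $(Y_i)$, the quantity $\sum_{k} a_k \Z_n^{(m)}(t_k)$ is a linear form in the $\ell_n$-dependent multiplier sequence $(\xi_{i,n}^{(m)})$ with deterministic weights driven by $Y_i/\sqrt n$. Applying Bernstein's big-block/small-block decomposition with big blocks of length $L_n$ and small blocks of length $\ell_n$ turns this into a sum of conditionally independent big-block contributions, with the small-block remainder being negligible provided $\ell_n/L_n \to 0$ and $L_n/n \to 0$; the bandwidth constraint $\ell_n = O(n^{1/2-\eps})$ leaves room to choose $L_n$ suitably. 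The conditional covariance equals
\[
\Ex\bigl( \Z_n^{(m)}(s)\Z_n^{(m)}(t) \mid \text{data} \bigr) = \frac{1}{n} \sum_{i\le \ip{ns},\, j \le \ip{nt}} \varphi\!\left(\frac{i-j}{\ell_n}\right) Y_i Y_j,
\]
a HAC-type quadratic form that, by standard results on lag-window estimators of the long-run variance under mixing, converges in probability to $(s\wedge t)\sigma^2$. The conditional Lindeberg condition for the big-block sums is verified using the uniform $\nu$-moments of the multipliers from (M1) together with the $(4+\delta)$-moments of $Y$. Since the conditional limit is deterministic, we deduce unconditional convergence to the announced Gaussian law. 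The extension to joint convergence across $m=1,\dots,M$ and jointly with $\Z_n$ proceeds via the Cramér--Wold device applied to a single grand linear combination: the independence of the $M$ multiplier sequences and of each of them from the data makes all cross-covariances vanish, yielding the block-diagonal limit covariance.

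\textbf{Step 3 (Asymptotic tightness).} For each $m$, establish a moment bound of the form
\[
\Ex \bigl| \Z_n^{(m)}(t) - \Z_n^{(m)}(s) \bigr|^{p} \le C \bigl( t - s \bigr)^{1+\eta}
\]
for some $p \in (2, 4+\delta)$ and $\eta > 0$, via a Rosenthal/Doukhan-type inequality applied to the mixing-dependent, multiplier-weighted partial sums; the bandwidth restriction $\eps > 1/(6+2\delta)$ comes in exactly here to make the fourth-moment bookkeeping close. Billingsley's moment criterion then implies tightness in $\ell^\infty([0,1])$ (the jumps being of order $n^{-1/2}$). Combining Steps 1--3 via Theorem 1.5.4 of \cite{vanWel96} yields the stated weak convergence.

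\textbf{Main obstacle.} The crux is Step 2, specifically the simultaneous control of: (i) the HAC-type quadratic form converging in probability to $(s\wedge t)\sigma^2$, which needs mixing rates strong enough to turn a fourth-moment calculation into a negligible bias; (ii) the conditional Lindeberg condition for big blocks whose summands depend on the random weights $Y_i$; and (iii) the negligibility of the small-block residuals. All three requirements together dictate the particular relation between $b$, $\delta$ and the admissible range $\eps \in (1/(6+2\delta), 1/2)$ for the bandwidth $\ell_n$. Once the marginal argument is in place, joint convergence and independence of the limits are essentially free because the multiplier sequences are genuinely independent.
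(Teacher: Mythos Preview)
Your overall architecture (invariance principle for $\Z_n$, finite-dimensional convergence plus tightness for the multiplier processes) matches the paper's, but the mechanism you propose for Step~2 is genuinely different from what the paper does, and it leaves a gap at the joint-convergence step.

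The paper works \emph{unconditionally} throughout. For a linear combination $\sum_l c_l \Z_n(s_l) + \sum_l d_l \Z_n^{(m)}(t_l)$ it applies Bernstein blocking to the single sequence $Z_{i,n}+Z_{i,n}^{(m)}$, with big blocks of length $b_n\sim n^{1/2-\eta_b}$ and small blocks of length $s_n\sim n^{1/2-\eta_s}$, both chosen strictly larger than $\ell_n$. Asymptotic independence of the big blocks then follows \emph{simultaneously} from the mixing of $(Y_i)$ (for the $Z_{i,n}$ part) and from the $\ell_n$-dependence of the multipliers (for the $Z_{i,n}^{(m)}$ part), and Lindeberg--Feller is verified once, unconditionally, for the entire grand linear combination; the block-diagonal limit covariance falls out of a direct variance computation.

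Your route instead conditions on the data and blocks according to the $\ell_n$-dependence of the multipliers. That is fine for the marginal law of each $\Z_n^{(m)}$, but when you then invoke ``the Cram\'er--Wold device applied to a single grand linear combination'' including $\Z_n$, the conditional framework no longer applies: conditionally on the data, $\Z_n$ is degenerate. You still owe an argument that conditional weak convergence of $(\Z_n^{(1)},\dots,\Z_n^{(M)})$ to a \emph{deterministic} Gaussian law, together with the marginal invariance principle for $\Z_n$, implies the joint unconditional convergence with independent limits. This is true, but it is a separate lemma that you have not supplied; the paper's unconditional blocking avoids the issue entirely.

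One misattribution: in the paper the constraint $\eps>1/(6+2\delta)$ enters in the \emph{finite-dimensional} step, not in tightness. It is exactly what guarantees that one can choose $\eta_b\in(1/(6+2\delta),\eps)$ so that the Lindeberg condition holds with $\nu=2+\delta$ moments. Tightness is obtained from the unconditional fourth-moment bound $\Ex[\{\Z_n^{(m)}(t)-\Z_n^{(m)}(s)\}^4]\le\kappa\{\lambda_n(s,t)\}^2$ together with a Bickel--Wichura/Balacheff--Dupont criterion, and uses only (M1) and the mixing rate, with no restriction on $\ell_n$.
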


The proof of the previous result is based on two lemmas, which are given first.

\begin{lem}[Finite-dimensional convergence]
\label{lem:fidi}
Assume that $(Y_i)_{i \in \N}$ is a strictly stationary sequence of centered random variables with $(4+\delta)$-moments for some $\delta > 0$ and such that the strong mixing coefficients associated with $(Y_i)_{i \in \N}$ satisfy $\alpha_r = O(r^{-b})$, $b > (4+\delta)(6+2\delta)/(2+\delta)^2$.  Also, let $(\xi_{i,n}^{(1)})_{i \in \N}$,\dots,$(\xi_{i,n}^{(M)})_{i \in \N}$ be independent copies of the same dependent multiplier sequence satisfying (M1)--(M3) in Definition~\ref{defn:mult_seq} such that $\ell_n = O(n^{1/2 - \eps})$ for some $1/(6+2\delta) < \eps < 1/2$. Then, the finite dimensional distributions of $( \Z_n, \Z_n^{(1)},\dots, \Z_n^{(M)} ) $ converge weakly to those of $( \Z, \Z^{(1)},\dots,\Z^{(M)} )$.
\end{lem}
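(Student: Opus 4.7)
The plan is to apply the Cramér--Wold device. Fix $d \in \N$, times $0 \le s_1 < \dots < s_d \le 1$ and real coefficients $(a_k)_{k=1}^d$, $(b_k^{(m)})_{k=1}^d$, $m=1,\dots,M$. It suffices to prove that
$$
T_n := \sum_{k=1}^d a_k \Z_n(s_k) + \sum_{m=1}^M \sum_{k=1}^d b_k^{(m)} \Z_n^{(m)}(s_k) = \frac{1}{\sqrt{n}} \sum_{i=1}^n W_{i,n} Y_i
$$
converges in distribution to a centered normal $T$ with variance
$$
\sigma_T^2 = \sigma^2 \sum_{k,l=1}^d a_k a_l (s_k \wedge s_l) + \sigma^2 \sum_{m=1}^M \sum_{k,l=1}^d b_k^{(m)} b_l^{(m)} (s_k \wedge s_l),
$$
where
$$
W_{i,n} = \sum_{k=1}^d a_k \1\{i \le \ip{ns_k}\} + \sum_{m=1}^M \sum_{k=1}^d b_k^{(m)} \1\{i \le \ip{ns_k}\} \xi_{i,n}^{(m)}.
$$
The block structure of the limiting covariance (no cross-terms between $\Z$ and $\Z^{(m)}$, no cross-terms between distinct bootstrap copies) is heuristically due to $\Ex(\xi_{i,n}^{(m)}) = 0$ and $\Ex(\xi_{i,n}^{(m)} \xi_{j,n}^{(m')}) = 0$ for $m \ne m'$, combined with the independence of the multipliers from the data. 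Crucially, $(W_{i,n})_{i}$ is independent of $(Y_i)$, is uniformly bounded in every $L^\nu$ norm by (M1), and is $\ell_n$-dependent by (M2) and the independence across $m$.

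Next, perform a Bernstein big-block/small-block decomposition: choose $p_n, q_n \to \infty$ satisfying $\ell_n \le q_n = o(p_n)$, $p_n = o(n^{1/2})$ and $n q_n / p_n^2 \to 0$, e.g.\ $p_n \sim n^{1/2 - \eps/2}$ and $q_n \sim n^{1/2 - \eps}$. Setting $k_n := \ip{n/(p_n+q_n)}$, partition $\{1,\dots,n\}$ into alternating big blocks $B_j$ of size $p_n$ and small blocks $S_j$ of size $q_n$, and write $T_n = \sum_{j=1}^{k_n} U_j + \sum_{j=1}^{k_n} V_j + R_n$ where $U_j = n^{-1/2} \sum_{i \in B_j} W_{i,n} Y_i$, $V_j = n^{-1/2} \sum_{i \in S_j} W_{i,n} Y_i$, and $R_n$ collects the final incomplete block. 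Using a Rio/Ibragimov covariance inequality for $\alpha$-mixing sequences, the independence of $W$ and $Y$, and the $(4+\delta)$-moment bound on $Y$, the small-block contribution satisfies $\Var(\sum_j V_j) = O(q_n/p_n) = o(1)$; $R_n = o_\Pr(1)$ follows similarly. Because $q_n \ge \ell_n$, the multiplier tuples $(\xi_{i,n}^{(m)})_{i \in B_j,\, m \le M}$ are mutually independent across $j$, whereas the $(Y_i)$-parts in distinct big blocks are only separated by $q_n$ and mixing; a Bradley-type coupling (or the Volkonskii--Rozanov characteristic-function inequality) at cost $k_n \alpha_{q_n}$, which is $o(1)$ under the postulated mixing rate, therefore allows us to replace $(U_j)_j$ by an independent sequence $(\tilde U_j)_j$ with the same marginal laws.

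To finish, apply the Lindeberg--Feller CLT to $\sum_{j=1}^{k_n} \tilde U_j$. Two computations are required. First, expanding $\sum_j \Var(U_j) = n^{-1} \sum_j \sum_{i,i' \in B_j} \Ex(W_{i,n} W_{i',n}) \Ex(Y_i Y_{i'})$ via $\Ex(\xi_{i,n}^{(m)}\xi_{i',n}^{(m)}) = \varphi((i-i')/\ell_n)$, the independence of $\xi^{(m)}$ from $\xi^{(m')}$ for $m \ne m'$, the continuity $\varphi(0)=1$ together with $|h|/\ell_n \to 0$ for fixed $h$, the absolute summability of $\gamma_Y(h) = \Ex(Y_0 Y_h)$, and the Riemann-sum approximation $k_n^{-1}\sum_j \1\{\min B_j \le \ip{ns_k}\} \to s_k$, yields $\sum_j \Var(\tilde U_j) \to \sigma_T^2$. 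Second, the Lindeberg condition follows from a Rosenthal-type inequality for $\alpha$-mixing sums at order $2+\delta'$ for some $\delta' > 0$ determined by $\delta$, giving $\Ex|\tilde U_j|^{2+\delta'} = O((p_n/n)^{1+\delta'/2})$ and hence $\sum_j \Ex|\tilde U_j|^{2+\delta'} = O((p_n/n)^{\delta'/2}) = o(1)$.

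The main obstacle will be the joint calibration of $p_n$, $q_n$, $\ell_n$, $\eps$, $\delta$ and $b$: one must simultaneously ensure (a) $k_n \alpha_{q_n} \to 0$ so that the coupling is harmless, (b) vanishing of the small-block and remainder variance (needing Rio-type mixing covariance bounds that in turn use $(4+\delta)$-moments on $Y$), and (c) validity of a Rosenthal bound at order $2+\delta'$ on the big blocks. The precise exponent $b > (4+\delta)(6+2\delta)/(2+\delta)^2$ together with the admissible range $\eps \in (1/(6+2\delta),1/2)$ are exactly what allows all three constraints to hold simultaneously. Bookkeeping this optimization, and carefully reconstructing $\sigma_T^2$ from the intricate covariance structure mixing $\varphi$ and $\gamma_Y$ (in particular, verifying the prelimit vanishing of the cross-$m$ and $a$-vs-$b$ covariance terms), will be the most technical part of the argument.
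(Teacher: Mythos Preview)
Your proposal is essentially the paper's own proof: Cramér--Wold reduction, Bernstein big/small blocking, negligibility of small blocks and remainder via mixing covariance bounds, Volkonskii--Rozanov characteristic-function decoupling of the big blocks at cost $k_n\alpha_{q_n}$, variance identification via $\varphi(h/\ell_n)\to 1$ and summability of $\gamma_Y$, and a Lindeberg--Feller finish. Two minor deviations are worth flagging. First, the paper does \emph{not} use a Rosenthal inequality for the Lindeberg step; it uses the crude Minkowski bound $\|B_{j,n}\|_{2+\nu}\le O(b_n)$ (since $\|W_{i,n}Y_i\|_{2+\nu}$ is uniformly bounded), which yields a Lindeberg sum of order $n^{1/2-\eta_b(1+\nu)}$ and is what forces the lower bound $\eta_b>1/(6+2\delta)$ on the big-block exponent---this is precisely where the hypothesis $\eps>1/(6+2\delta)$ enters. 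Second, your ``e.g.'' block sizes $p_n\sim n^{1/2-\eps/2}$, $q_n\sim n^{1/2-\eps}$ do not satisfy your own stated condition $nq_n/p_n^2\to 0$ (this ratio is $\sim n^{1/2}$), and for $\eps$ near $1/2$ the coupling cost $k_n\alpha_{q_n}$ need not vanish; the paper instead introduces two extra exponents $1/(6+2\delta)<\eta_b<\eta_s<\eps$ with $\eta_s<1/2-1/a$, $a=b(2+\delta)/(4+\delta)$, decoupling the block sizes from $\eps$ and making the calibration transparent (the mixing hypothesis on $b$ is exactly what guarantees this interval is nonempty). With those adjustments your outline matches the paper's argument.
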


\begin{proof}
The proof is an adapation of that of Lemma~A.1 in \cite{BucKoj14}. Fix $m \in \{1,\dots,M\}$. For the sake of brevity, we shall only show that the finite-dimensional distributions of $(\Z_n, \Z_n^{(m)})$ converge weakly to those of $(\Z, \Z^{(m)} )$, the proof of the stated result being a more notationally complex version of the proof of the latter result.

Let $q \in \N$, $q > 1$, be arbitrary, and let $(s_1,t_1),\dots,(s_q,t_q) \in [0,1]^2$. The result is proved if we show that
$$
\left(\Z_n(s_1),\Z_n^{(m)}(t_1),\dots, \Z_n(s_q),\Z_n^{(m)}(t_q)\right) \leadsto \left(\Z (s_1),\Z^{(m)} (t_1),\dots, \Z(s_q), \Z^{(m)}(t_q)\right).
$$
Let $c_1,d_1,\dots,c_q,d_q \in \R$ be arbitrary. By the Cram\'er--Wold device, it then suffices to show that
$$
Z_n = \sum_{l=1}^q c_l \Z_n (s_l) + \sum_{l=1}^q d_l \Z_n^{(m)} (t_l) \leadsto Z = \sum_{l=1}^q c_l \Z (s_l) + \sum_{l=1}^q d_l \Z^{(m)}(t_l).
$$
Now, for any $i \in \{1,\dots,n\}$, let 
$$
Z_{i,n} = \sum_{l=1}^q c_l Y_i \1(i \leq \ip{ns_l}) \qquad \mbox{and} \qquad
Z_{i,n}^{(m)} = \xi_{i,n}^{(m)} \sum_{l=1}^q d_l Y_i \1(i \leq \ip{nt_l}).
$$
Hence, $Z_n = n^{-1/2} \sum_{i=1}^n (Z_{i,n} + Z_{i,n}^{(m)})$. To prove the convergence in distribution of $Z_n$ to $Z$, we employ a blocking technique \citep[see, e.g.,][page 31]{DehPhi02}. Each block is composed of a big subblock followed by a small subblock. Let $1/(6+2\delta) < \eta_b < \eta_s < \eps$ such that $\eta_s < 1/2 - 1/a$, where $a = b (2+\delta)/(4+\delta) $. Notice that the condition on $b$ implies that $a > (6+2\delta)/(2+\delta)$, which is equivalent to $1/(6+2\delta) < 1/2 - 1/a$. Hence, it is possible to choose $\eta_b$ and $\eta_s$ according to the above constraints. The length of the small subblocks is $s_n = \ip{n^{1/2-\eta_s}}$ and the length of the big subblocks is $b_n = \ip{n^{1/2-\eta_b}}$ so that the length of a block is $b_n + s_n$. The total number of blocks is $k_n = \ip{n/(b_n+s_n)}$, and we can write $n = k_n (b_n + s_n) + \{n - k_n (b_n + s_n)\}$. Note that $s_n\sim n^{1/2-\eta_s}, b_n \sim n^{1/2-\eta_b}$ and $k_n\sim n^{1/2+\eta_b}$ and that both $b_n$ and $s_n$ dominate~$\ell_n$. As we continue, $n$ is taken sufficiently large so that $b_n > s_n > \ell_n$. Notice also that the condition $\eta_s < 1/2 - 1/a$ implies that $n s_n^{-a} \to 0$. Now, for any $j \in \{1,\dots,k_n\}$, let
$$
B_{j,n} = \sum_{i=(j-1)(b_n+s_n)+1}^{(j-1)(b_n+s_n) + b_n}  (Z_{i,n} + Z_{i,n}^{(m)}) \qquad \mbox{and} \qquad S_{j,n} = \sum_{i=(j-1)(b_n+s_n)+ b_n + 1}^{j(b_n+s_n)}  (Z_{i,n} + Z_{i,n}^{(m)}) 
$$
be the sums of the $ (Z_{i,n} + Z_{i,n}^{(m)})$ in the $j$th big subblock and the $j$th small subblock, respectively.
Then, 
$$
Z_n = n^{-1/2} \sum_{j=1}^{k_n} B_{j,n} + n^{-1/2} \sum_{j=1}^{k_n} S_{j,n} + n^{-1/2} R_n, 
$$
where $R_n = \sum_{i=k_n (b_n + s_n) + 1}^n  (Z_{i,n} + Z_{i,n}^{(m)})$ is the sum of the $ (Z_{i,n} + Z_{i,n}^{(m)})$ after the last small subblock. It follows that
\begin{multline}
\label{varZn}
\Var(Z_n) = \Var \left( n^{-1/2} \sum_{j=1}^{k_n} B_{j,n} \right)  + 2 n^{-1} \sum_{j,j'=1}^{k_n} \Ex ( B_{j,n} S_{j',n} ) + 2 n^{-1} \sum_{j=1}^{k_n} \Ex ( B_{j,n} R_n ) \\ + n^{-1} \sum_{j,j'=1}^{k_n} \Ex(S_{j,n}S_{j',n}) + 2 n^{-1} \sum_{j=1}^{k_n} \Ex ( S_{j,n} R_n) + \Ex ( n^{-1}  R_n^2 ).
\end{multline}
We shall now show that all the terms on the right except the first one tend to zero. Notice that the convergence of the fourth and sixth term to zero will imply that $| Z_n - n^{-1/2} \sum_{j=1}^{k_n} B_{j,n} | = | n^{-1/2} \sum_{j=1}^{k_n} S_{j,n} + n^{-1/2} R_n | \p 0$. We start with the second one. For any $i \in \N$, let $\gamma(i) = \Cov\{ Y_0, Y_i \}$. We have
$$
\Ex(B_{j,n}S_{j',n}) = \sum_{i=(j-1)(b_n+s_n) + 1}^{j(b_n+s_n) + b_n} \sum_{i'=(j'-1)(b_n+s_n)+ b_n + 1}^{j'(b_n+s_n)} \{ \Ex(Z_{i,n} Z_{i',n}) + \Ex (Z_{i,n}^{(m)} Z_{i',n}^{(m)}) \}.
$$
Now,
$$
| \Ex(Z_{i,n} Z_{i',n}) | \leq  \sum_{l,l'=1}^q | c_l c_{l'} | |\gamma(|i'-i|)| \leq 10 \alpha_{|i'-i|}^{(2+\delta)/(4+\delta)} \|Y_0\|_{4+\delta}^2 \sum_{l,l'=1}^q | c_l c_{l'} |,
$$
where the last inequality is a consequence of Lemma~3.11 in \cite{DehPhi02} with $r=s=4+\delta$ and $t = (4+\delta)/(2+\delta)$, which implies that 
\begin{equation}
\label{eq:covineq}
| \gamma(i) | \leq 10 \alpha_i^{(2+\delta)/(4+\delta)} \|Y_0\|_{4+\delta}^2, \qquad i \in \N. 
\end{equation}
Similarly,
$$
| \Ex(Z_{i,n}^{(m)} Z_{i',n}^{(m)}) | \leq \1( |i'-i|  \leq \ell_n) 10 \alpha_{|i'-i|}^{(2+\delta)/(4+\delta)} \|Y_0\|_{4+\delta}^2 \sum_{l,l'=1}^q | d_l d_{l'} |
$$
since, by Cauchy-Schwarz's inequality, $\Ex(\xi_{i,n}^{(m)} \xi_{i',n}^{(m)} ) \leq \Ex\{ (\xi_{0,n}^{(m)})^2 \} = 1$. It follows that
$$
|\Ex(B_{j,n}S_{j,n})| \leq \mbox{const} \times \sum_{i=1}^{b_n} \sum_{i'=b_n + 1}^{b_n + s_n} \alpha_{|i-i'|}^{(2+\delta)/(4+\delta)} \leq \mbox{const} \times  \sum_{i=1}^{b_n + s_n-1} i \alpha_ i^{(2+\delta)/(4+\delta)} < \infty
$$ 
since $\sum_{i=1}^\infty i \alpha_i^{(2+\delta)/(4+\delta)} < \infty$ as $a>2$. Similarly, we obtain that $|\Ex(B_{j,n}S_{j-1,n})| < \infty$. For $j' \ge j + 1$ or $j > j' + 1$, $\Ex(B_{j,n}S_{j',n}) = O(b_n s_n \alpha_{b_n}^{(2+\delta)/(4+\delta)}) = O(b_n s_n b_n^{-a} )$. Hence, 
$$
2 n^{-1} \sum_{j,j'=1}^{k_n} \Ex ( B_{j,n} S_{j',n} ) = O(n^{-1} k_n) + O(n^{-1} k_n^2 b_n s_n b_n^{-a} ) = O(b_n^{-1}) + O(n s_n b_n^{-a-1} ).
$$ 
Since $n s_n b_n^{-a-1} < n s_n^{-a}$, the previous term converges to zero. In a similar way, for the third summand on the right-hand side of~\eqref{varZn}, we have
\begin{multline*}
2 n^{-1} \sum_{j=1}^{k_n} \Ex ( B_{j,n} R_n ) = 2 n^{-1} \sum_{j=1}^{k_n - 1} \Ex ( B_{j,n} R_n ) + 2 n^{-1} \Ex ( B_{k_n,n} R_n ) \\= O(n^{-1} k_n b_n (n-k_n(b_n+s_n)) \alpha_{b_n}^{(2+\delta)/(4+\delta)}) + O(n^{-1} b_n (n-k_n(b_n+s_n))) \\ =  O(b_n^{-a+1}) + O(n^{-1} b_n^2) \to 0
\end{multline*}
using the fact that $n-k_n(b_n+s_n) < b_n +s_n$. The case of the fifth summand is similar. Regarding the fourth summand in \eqref{varZn}, we have 
$$
\Ex(S_{j,n}S_{j',n}) = \sum_{i=(j-1)(b_n+s_n) + b_n + 1}^{j(b_n+s_n) } \sum_{i'=(j'-1)(b_n+s_n)+ b_n + 1}^{j'(b_n+s_n)} \{ \Ex(Z_{i,n} Z_{i',n}) + \Ex (Z_{i,n}^{(m)} Z_{i',n}^{(m)}) \},
$$
which implies that
\begin{multline*}
\Ex(S_{j,n}^2) \le \mbox{const} \times \sum_{i,i'=b_n+1}^{b_n+s_n} \alpha_{|i'-i|}^{(2+\delta)/(4+\delta)} \leq \mbox{const} \times 2 \sum_{i=0}^{s_n - 1} (s_n - i) \alpha_i^{(2+\delta)/(4+\delta)} \\ \leq \mbox{const} \times s_n \sum_{i=0}^\infty \alpha_i^{(2+\delta)/(4+\delta)} = O(s_n)
\end{multline*}
and that, for $j\ne j'$, $\Ex(S_{j,n}S_{j',n}) = O(s_n^2 \alpha_{b_n}^{(2+\delta)/(4+\delta)}) = O(s_n^2 b_n^{-a})$. Hence,
\[
n^{-1} \sum_{j,j'=1}^{k_n} \Ex(S_{j,n}S_{j',n}) = O(n^{-1}k_ns_n) + O(n^{-1}k_n^2s_n^2b_n^{-a}) = O(b_n^{-1} s_n) + O(n s_n^2b_n^{-a-2}) 
\]
which converges to $0$ since $b_n^{-1} s_n \to 0$ and $n s_n^2b_n^{-a-2} < n s_n^{-a}$. Finally, for the sixth summand in \eqref{varZn}, we have
\[
\Ex ( n^{-1}  R_n^2 ) =O(n^{-1}\{ n-k_n(b_n+s_n)\}^2) = O(n^{-1} (b_n +s_n)^2) = O(n^{-1} b_n^2) 
\]
since $n-k_n(b_n+s_n) < b_n +s_n$.

In order to prove that $Z_n$ converges in distribution to $Z$, it suffices therefore to prove that $n^{-1/2} \sum_{j=1}^{k_n} B_{j,n}$ converges in distribution to $Z$. Let $\psi_{j,n}(t) = \exp(itn^{-1/2} B_{j,n})$, $t \in \R$, $j \in \{1,\dots,k_n\}$, and observe that the characteristic function of $n^{-1/2} \sum_{j=1}^{k_n} B_{j,n}$ can be written as $t \mapsto \Ex \left\{ \prod_{j=1}^{k_n} \psi_{j,n}(t) \right\}$. Also, for two $\sigma$-fields $\FF_1$ and $\FF_2$, let
$$
\alpha(\FF_1,\FF_2) = \sup_{A \in \FF_1, B \in \FF_2} | \Pr(A \cap B) - \Pr(A) \Pr(B) | .
$$
Now, for any $t \in \R$,  we can write
\begin{multline*}
\left| \Ex\left\{ \prod_{j=1}^{k_n} \psi_{j,n}(t) \right\} - \prod_{j=1}^{k_n} \Ex \{ \psi_{j,n}(t) \} \right| \leq \left| \Ex\left\{ \prod_{j=1}^{k_n} \psi_{j,n}(t) \right\} - \Ex \{ \psi_{1,n}(t) \} \Ex\left\{ \prod_{j=2}^{k_n} \psi_{j,n}(t) \right\} \right| \\ +  \left| \Ex \{ \psi_{1,n}(t) \} \right| \left| \Ex\left\{ \prod_{j=2}^{k_n} \psi_{j,n}(t) \right\} - \Ex \{ \psi_{2,n}(t) \} \Ex\left\{ \prod_{j=3}^{k_n} \psi_{j,n}(t) \right\} \right| + \dots \\ \dots + \left| \prod_{j=1}^{k_n-2} \Ex \{ \psi_{j,n}(t) \} \right| \left| \Ex \left\{ \prod_{j=k_n - 1}^{k_n}  \psi_{j,n}(t) \right\} - \prod_{j=k_n - 1}^{k_n} \Ex \{ \psi_{j,n}(t) \} \right|.
\end{multline*}
Using the fact that the modulus of a characteristic function is smaller than one and applying $k_n-1$ times Lemma~3.9 of \cite{DehPhi02}, we obtain 
$$
\left| \Ex\left\{ \prod_{j=1}^{k_n} \psi_{j,n}(t) \right\} - \prod_{j=1}^{k_n} \Ex \{ \psi_{j,n}(t) \} \right| \leq 2 \pi k_n \max_{1 \leq i \leq k_n-1} \alpha \left[ \sigma \left\{ \psi_{i,n}(t) \right\}, \sigma \left\{ \prod_{j=i+1}^{k_n}  \psi_{j,n}(t) \right\} \right].
$$
Since the big subblocks are $s_n$ observations apart, the right-hand side of the previous inequality is smaller than $2 \pi k_n \alpha_{s_n} = O(k_n s_n^{-a})$ which tends to zero as $k_n s_n^{-a} \leq n s_n^{-a} \to 0$. Hence, for any $t \in \R$, 
$$
\left| \Ex\left\{ \prod_{j=1}^{k_n} \psi_{j,n}(t) \right\} - \prod_{j=1}^{k_n} \Ex \{ \psi_{j,n}(t) \} \right| \to 0.
$$
In other words, the characteristic function of $n^{-1/2} \sum_{j=1}^{k_n} B_{j,n}$ is asymptotically equivalent to the characteristic function of $n^{-1/2} \sum_{j=1}^{k_n} B_{j,n}'$, where $B_{1,n}',\dots,B_{k_n,n}'$ are independent and $B_{j,n}'$ and $B_{j,n}$ have the same distribution for all $j \in \{1,\dots,k_n\}$. To conclude that $n^{-1/2} \sum_{j=1}^{k_n} B_{j,n}$ converges in distribution to $Z$, it suffices therefore to show that $n^{-1/2} \sum_{j=1}^{k_n} B_{j,n}'$ converges in distribution to $Z$. This will be accomplished using the Lindeberg--Feller central limit theorem for triangular arrays. Hence, let us first show that $\Var \left(n^{-1/2} \sum_{j=1}^{k_n} B_{j,n}' \right) \to \Var(Z)$. 

We have 
$$
\Var(Z) = \sum_{l,l'=1}^q c_l c_{l'} (s_l \wedge s_{l'}) \sum_{i \in \Z} \gamma(|i|) + \sum_{l,l'=1}^q d_l d_{l'} (t_l \wedge t_{l'}) \sum_{i \in \Z} \gamma(|i|).
$$
Note that $\sum_{i \in \Z} \gamma(|i|) = \Var\{\Z(1) \} < \infty$ since, from~\eqref{eq:covineq}, 
\begin{equation}
\label{eq:absconv}
\sum_{i \in \Z} |\gamma(|i|)| = |\gamma(0)| + 2\sum_{i=1}^\infty |\gamma(i)|  \leq \Var(Y_0) + 20 \sum_{i=1}^\infty \alpha_i^{(2+\delta)/(4+\delta)} < \infty.
\end{equation}
Now, we shall first show that
$$
\Var \left(n^{-1/2} \sum_{j=1}^{k_n} B_{j,n}' \right) = \Var(Z_n) + o(1)
$$
and then that $\Var(Z_n) \to \Var(Z)$. We have
\begin{align*}
\Var \left(n^{-1/2} \sum_{j=1}^{k_n} B_{j,n}' \right) &= n^{-1} \sum_{j=1}^{k_n} \Var \left( B_{j,n}'\right) = n^{-1} \sum_{j=1}^{k_n} \Var \left( B_{j,n}\right) \\
&= \Var \left( n^{-1/2} \sum_{j=1}^{k_n} B_{j,n} \right)  - n^{-1} \sum_{j,j'=1 \atop j \ne j'}^{k_n} \Ex\left( B_{j,n} B_{j',n} \right).
\end{align*}
From \eqref{varZn}, we know that $\Var ( n^{-1/2} \sum_{j=1}^{k_n} B_{j,n} ) = \Var(Z_n) + o(1)$. Hence, it remains to show that the double sum in the last displayed formula converges to $0$. Proceeding as for the summands on the right of~\eqref{varZn}, we have that, for $j\ne j'$,  $\Ex( B_{j,n} B_{j',n} ) = O(b_n^2\alpha_{s_n}^{(2+\delta)/(4+\delta)}) = O(b_n^2 s_n^{-a})$. Hence,
\[
n^{-1} \sum_{j,j'=1 \atop j \ne j'}^{k_n} \Ex \left( B_{j,n} B_{j',n} \right) = O(n^{-1} k_n^2 b_n^2 s_n^{-a}) = O(n s_n^{-a}) \to 0.
\]
Thus, it remains to show that $\Var(Z_n) \to \Var(Z)$. Now, 
$$
\Var(Z_n) = n^{-1} \sum_{i,i'=1}^n \{ \Ex(Z_{i,n} Z_{i',n}) + \Ex (Z_{i,n}^{(m)} Z_{i',n}^{(m)}) \}. 
$$
It follows that 
\begin{multline}
\label{varZn2}
\Var(Z_n) = \sum_{l,l'=1}^q c_l c_{l'} n^{-1} \sum_{i=1}^{\ip{ns_l}} \sum_{i'=1}^{\ip{ns_{l'}}} \gamma(|i'-i|)   \\
+ \sum_{l,l'=1}^q d_l d_{l'} n^{-1} \sum_{i=1}^{\ip{nt_l}} \sum_{i'=1}^{\ip{nt_{l'}}} \varphi\{(i'-i)/\ell_n\} \gamma(|i'-i|),
\end{multline}
where $\varphi$ is the function appearing in Assumption~(M3). Let us first deal with the second term on the right. Let $l,l'\in \{1,\dots, q\}$ be arbitrary and suppose without loss of generality that $t_l \le t_{l'}$. Then,
\begin{multline} 
\label{eq:decomp1}
n^{-1} \sum_{i=1}^{\ip{nt_l}} \sum_{i'=1}^{\ip{nt_{l'}}} \varphi\{(i'-i)/\ell_n\} \gamma(|i'-i|)  =  n^{-1} \sum_{i=1}^{\ip{nt_l}} \sum_{i'=1}^{\ip{nt_l}}  \varphi\{(i'-i)/\ell_n\} \gamma(|i'-i|)    \\
+ n^{-1} \sum_{i=1}^{\ip{nt_l}} \sum_{i'= \ip{nt_l}+1}^{\ip{nt_{l'}}} \varphi\{(i'-i)/\ell_n\} \gamma(|i'-i|).   
\end{multline}
The first sum on the right-hand side is equal to 
$$
n^{-1} \sum_{i=-\ip{nt_l}}^{\ip{nt_l}}  \{\ip{nt_l}-|i|\} \varphi(i/\ell_n) \gamma(|i|) = \sum_{i=-\ip{nt_l}}^{\ip{nt_l}}  \{\lambda_n(0,t_l)-|i|/n\} \varphi(i/\ell_n) \gamma(|i|)
$$
and converges to $t_l \sum_{i\in\Z} \gamma(|i|)$ by Assumption~(M3),~\eqref{eq:absconv} and dominated convergence. The second sum on the right-hand side of~\eqref{eq:decomp1} is bounded in absolute value by $\mbox{const} \times n^{-1} \sum_{i=1}^{\ip{nt_{l'}}-1} i \alpha_i^{(2+\delta)/(4+\delta)} \to 0$. Hence, the second term on the right of~\eqref{varZn2} converges to $\sum_{l,l'=1}^q d_l d_{l'} (t_l \wedge t_{l'}) \sum_{i \in \Z} \gamma(|i|)$. Similarly, the first term on the right of~\eqref{varZn2} converges to $\sum_{l,l'=1}^q c_l c_{l'} (s_l \wedge s_{l'}) \sum_{i \in \Z} \gamma(|i|)$. Thus, $\Var(Z_n) \to \Var(Z)$.

To be able to conclude that $n^{-1/2} \sum_{j=1}^{k_n} B_{j,n}'$ converges in distribution to $Z$, it remains to prove the Lindeberg condition of the Lindeberg-Feller theorem, i.e., that, for every $\epsilon > 0$, 
$$
n^{-1} \sum_{j=1}^{k_n} \Ex \{ (B_{j,n}')^2  \1( |B_{j,n}'| > n^{1/2} \epsilon ) \}  
= n^{-1} \sum_{j=1}^{k_n} \Ex \{ B_{j,n}^2  \1( |B_{j,n}| > n^{1/2} \epsilon ) \} \to 0.
$$
Let $\epsilon > 0$ be arbitrary. Using H\"older's inequality with $p=1+\nu/2$, where $0 < \nu \leq 2 + \delta$ is to be chosen later on, and Markov's inequality, we have 
\begin{align*}
n^{-1} \sum_{j=1}^{k_n} &\Ex \{ B_{j,n}^2  \1( |B_{j,n}| > n^{1/2} \epsilon ) \} \\ 
&\leq n^{-1} \sum_{j=1}^{k_n} \{ \Ex ( |B_{j,n}|^{2+\nu} ) \}^{2/(2+\nu)} \{ \Pr( |B_{j,n}| > n^{1/2} \epsilon )\}^{\nu/(2+\nu)} \\
&\leq n^{-1} \sum_{j=1}^{k_n} \{ \Ex ( |B_{j,n}|^{2+\nu} ) \}^{2/(2+\nu)} \{ \Pr( |B_{j,n}|^{2+\nu} > n^{(2+\nu)/2} \epsilon^{2+\nu} )\}^{\nu/(2+\nu)} \\
&\leq n^{-1} \sum_{j=1}^{k_n} \{ \Ex ( |B_{j,n}|^{2+\nu} ) \}^{2/(2+\nu)} \{ \Ex( |B_{j,n}|^{2+\nu} ) \}^{\nu/(2+\nu)}  (n^{1/2} \epsilon )^{-\nu} \\
&\leq n^{-1} \sum_{j=1}^{k_n} \Ex ( |B_{j,n}|^{2+\nu} )  n^{-\nu/2} \epsilon^{-\nu}.
\end{align*}
Now, from Minkowski's inequality,
$$
\{ \Ex ( |B_{j,n}|^{2+\nu} ) \}^{1/(2+\nu)} \leq \sum_{i=(j-1)(b_n+s_n)+1}^{(j-1)(b_n+s_n) + b_n} \left[ \{ \Ex ( |Z_{i,n}|^{2+\nu} ) \}^{1/(2+\nu)} + \{ \Ex ( |Z_{i,n}^{(m)}|^{2+\nu} ) \}^{1/(2+\nu)} \right]= O(b_n)
$$
since, for any $\nu \in(0, 2+\delta)$, 
$$
\max_{1 \leq i \leq n} \Ex ( |Z_{i,n}|^{2+\nu} )  \leq \Ex \left( \left[ \sum_{l=1}^q |c_l| |Y_0| \right]^{2+\nu} \right) < \infty,
$$ 
and
$$
\max_{1 \leq i \leq n} \Ex ( |Z_{i,n}^{(m)}|^{2+\nu} )  \leq \Ex \left( \left[ | \xi_{0,n} | \sum_{l=1}^q |d_l| | Y_0 | \right]^{2+\nu} \right) < \infty.
$$ 
It follows that 
$$
n^{-1} \sum_{j=1}^{k_n} \Ex \{ B_{j,n}'^2  \1( |B_{j,n}'| > n^{1/2} \epsilon ) \} = O(n^{-1} k_n b_n^{2+\nu} n^{-\nu/2}) = O(b_n^{1+\nu} n^{-\nu/2}) = O(n^{1/2 - \eta_b (1+\nu)}),
$$
which converges to zero for $\nu > 1/(2 \eta_b) - 1$. The condition $1/(6+2\delta) < \eta_b$ imposed at the beginning of the proof is equivalent to $2+\delta > 1/(2 \eta_b) - 1$. Hence, the desired convergence is obtained by taking $\nu = 2 + \delta$.
\end{proof}

\begin{lem}[Moment inequality]
\label{lem:moments4}
Assume that $(Y_i)_{i \in \N}$ is a strictly stationary sequence of centered random variables with $(4+\delta)$-moments for some $\delta > 0$ and such that the strong mixing coefficients associated with $(Y_i)_{i \in \N}$ satisfy $\alpha_r = O(r^{-b})$, $b > 2(4+\delta)/\delta$.  Also, for any $m \in \{1,\dots,M\}$, let $(\xi_{i,n}^{(m)})_{i \in \N}$ be a sequence satisfying~(M1) in Definition~\ref{defn:mult_seq}. Then, for any $0 \leq s \leq t \leq 1$,
$$ 
\Ex \left[ \{ \Z_n^{(m)}(s) - \Z_n^{(m)}(t) \}^4 \right]  \leq \kappa \{ \lambda_{n}(s,t) \}^2,
$$
where $\kappa > 0$ is a constant depending on the mixing coefficients, $\Ex[\{\xi_{0,n}^{(m)}\}^4]$ and $\|Y_0\| _{4+\delta}^4$. 
\end{lem}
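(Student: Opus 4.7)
The plan is to exploit the independence of the multiplier sequence and the data and to reduce the claim to a fourth-moment bound on the strongly mixing sequence $(Y_i)_{i\in\N}$. Setting $N=\ip{nt}-\ip{ns}$ and writing $\Z_n^{(m)}(t)-\Z_n^{(m)}(s)=n^{-1/2}\sum_{i\in I}\xi_{i,n}^{(m)}Y_i$ for an index set $I$ of cardinality $N$, the target becomes $\Ex[(\sum_{i\in I}\xi_{i,n}^{(m)}Y_i)^4]\le\kappa' N^2$ after clearing the factor $n^{-2}$. Dropping the superscript $(m)$ for brevity and using independence,
$$
\Ex\!\left[\Bigl(\sum_{i\in I}\xi_iY_i\Bigr)^{\!4}\right]
=\sum_{i,j,p,l\in I}\Ex(\xi_i\xi_j\xi_p\xi_l)\,\Ex(Y_iY_jY_pY_l),
$$
while H\"older's inequality combined with (M1) yields $|\Ex(\xi_i\xi_j\xi_p\xi_l)|\le\Ex(\xi_0^4)$ uniformly in $n$. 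It therefore suffices to show $\sum_{i,j,p,l\in I}|\Ex(Y_iY_jY_pY_l)|\le CN^2$.

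To bound the latter, restrict by permutation symmetry to $i\le j\le p\le l$. Since $\Ex(Y_i)=0$, Davydov's covariance inequality with exponents $(4+\delta,(4+\delta)/3)$ applied at the gap $j-i$ or $l-p$ gives $|\Ex(Y_iY_jY_pY_l)|\le C\alpha_{j-i}^{\delta/(4+\delta)}\|Y_0\|_{4+\delta}^4$ or the analogue with $\alpha_{l-p}$; with exponents $((4+\delta)/2,(4+\delta)/2)$ applied at the middle gap $p-j$, it instead yields $|\Ex(Y_iY_jY_pY_l)|\le C\alpha_{p-j}^{\delta/(4+\delta)}\|Y_0\|_{4+\delta}^4+|\gamma(j-i)\gamma(l-p)|$, with $\gamma(k)=\Cov(Y_0,Y_k)$. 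Selecting the bound corresponding to the largest of the three gaps $g=\max(j-i,p-j,l-p)$ produces $|\Ex(Y_iY_jY_pY_l)|\le C\alpha_g^{\delta/(4+\delta)}\|Y_0\|_{4+\delta}^4+|\gamma(j-i)\gamma(l-p)|$. The pair-product sum is of order $N^2$: dropping the constraint $j\le p$ factorizes it as $(\sum_{1\le i\le j\le N}|\gamma(j-i)|)^2\le N^2(\sum_k|\gamma(k)|)^2$, and $\sum_k|\gamma(k)|<\infty$ follows from Davydov's inequality and the computation in~\eqref{eq:absconv}. A simple count shows the number of ordered 4-tuples in $[1,N]^4$ with maximum gap equal to $g$ is at most $3N(g+1)^2$, so the mixing contribution is bounded by $CN\sum_{g=0}^N(g+1)^2\alpha_g^{\delta/(4+\delta)}$.

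The hard part will be controlling this last sum: writing $\beta=b\delta/(4+\delta)$, the series $\sum_g g^2\alpha_g^{\delta/(4+\delta)}$ converges only if $\beta>3$, i.e., $b>3(4+\delta)/\delta$, which is strictly stronger than our assumption $b>2(4+\delta)/\delta$. I would therefore split cases according to whether $\beta>3$, $\beta=3$, or $\beta\in(2,3)$; a direct integral-comparison argument gives $\sum_{g=0}^N(g+1)^2\alpha_g^{\delta/(4+\delta)}=O(N^{(3-\beta)_+})$ up to logarithmic corrections at $\beta=3$, so the mixing contribution is $O(N^{4-\beta})=o(N^2)$ under the strict inequality $\beta>2$. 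Adding the pair-product and mixing contributions and dividing by $n^2$ then yields the claim with $\kappa$ depending only on $\Ex(\xi_0^4)$, $\|Y_0\|_{4+\delta}^4$, and the mixing coefficients.
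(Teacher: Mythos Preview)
Your proof is correct and follows essentially the same route as the paper's own argument: expand the fourth moment, use independence of multipliers and data together with $|\Ex(\xi_i\xi_j\xi_p\xi_l)|\le\Ex(\xi_0^4)$, apply Davydov's covariance inequality (Lemma~3.11 in \cite{DehPhi02}) at the largest of the three gaps, and separate the resulting bound into a pair-product piece of order $N^2$ and a mixing piece controlled via the partial sum $\sum_{g\le N} g^2\alpha_g^{\delta/(4+\delta)}=O(N^{(3-\beta)_+})$. The paper performs the same steps, first reducing the quadruple sum to a triple sum over the gaps (which is equivalent to your counting ``at most $3N(g+1)^2$ ordered tuples with maximal gap $g$''), and writes the final estimate as $K\lambda_n(s,t)\{\lambda_n(s,t)+n^{\tau-1}\lambda_n(s,t)^\tau\}$ with $\tau=(3-\beta)_+$, which is your $N^{4-\beta}/n^2$ in different notation; one small stylistic remark is that the conclusion you need is the uniform bound $N^{4-\beta}\le N^2$ for $N\ge 1$ (since $\beta>2$), not merely $o(N^2)$ as $N\to\infty$.
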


\begin{proof}
The proof is an adapation of that of Lemma~A.2 in \cite{BucKoj14}.
The result holds trivially if $\ip{ns} = \ip{nt}$. Let us therefore assume that $\ip{nt} - \ip{ns} \geq 1$. Proceeding as in the proof of Lemma 3.22 in \cite{DehPhi02}, we can write
\begin{align} 
\nonumber
& \Ex \left[ \{ \Z_n^{(m)}(s) - \Z_n^{(m)}(t) \}^4 \right]  \\
	& \hspace{.5cm}= \frac{1}{n^2} \sum_{i_1, i_2, i_3, i_4=\ip{ns}+1}^{\ip{nt}} \Ex[\xi_{i_1,n}^{(m)}\xi_{i_2,n}^{(m)} \xi_{i_3,n}^{(m)} \xi_{i_4,n}^{(m)}] \Ex[Y_{i_1}Y_{i_2} Y_{i_3}Y_{i_4}],  \nonumber \\
\label{eq:sum1}
	&\hspace{.5cm} \le \frac{4!\lambda_n(s,t)}{ n} \sum_{0\le i,j,k \le \ip{nt}-\ip{ns}-1 \atop i+j+k \le \ip{nt}-\ip{ns}-1} | \Ex[ \xi_{0,n}^{(m)}\xi_{i,n}^{(m)} \xi_{i+j,n}^{(m)} \xi_{i+j+k,n}^{(m)}] \Ex[Y_{0}Y_{i} Y_{i+j}Y_{i+j+k}] |.
\end{align} 
On the one hand, $|\Ex[\xi_{0,n}^{(m)}\xi_{i,n}^{(m)} \xi_{i+j,n}^{(m)} \xi_{i+j+k,n}^{(m)}] |\le \Ex[\{ \xi_{0,n}^{(m)}\}^4]$. On the other hand, by Lemma~3.11 of \cite{DehPhi02} and using the generalized verson of Holder's inequality, we have
\begin{align*}
 \Ex[Y_{0} (Y_{i} Y_{i+j}Y_{i+j+k}) ] &\le 10 \alpha_i^{\delta/(4+\delta)} \| Y_0\|_{4+\delta} \| Y_iY_{i+j}Y_{i+j+k} \|_{(4+\delta)/3} \le 10 \alpha_i^{\delta/(4+\delta)} \| Y_0\|_{4+\delta}^4, \\
\Ex[(Y_{0} Y_{i} Y_{i+j})Y_{i+j+k} ] &\le 10 \alpha_k^{\delta/(4+\delta)} \| Y_0\|_{4+\delta}^4,
\end{align*}
and
\begin{align*}
|\Ex[(Y_{0} Y_{i})( Y_{i+j}Y_{i+j+k}) ] | &\le | \Ex[Y_{0} Y_{i} ] \Ex[ Y_{i+j}Y_{i+j+k} ] |  
+ 10 \alpha_j^{\delta/(4+\delta)} \|Y_0Y_i\|_{(4+\delta)/2} \| Y_{i+j}Y_{i+j+k} \|_{(4+\delta)/2}  \\
 &\le 100 \alpha_i^{(2+\delta)/(4+\delta)}  \alpha_k^{(2+\delta)/(4+\delta)} \| Y_0\|_{4+\delta}^4 + 10 \alpha_j^{\delta/(4+\delta)} \| Y_0\|_{(4+\delta)}^4.
\end{align*}
Proceeding as in Lemma 3.22 of \cite{DehPhi02}, we split the sum on the right of \eqref{eq:sum1} into three sums according to which of the indices $i,j,k$ is the largest. Combining this decomposition with the three previous inequalities, we obtain
\begin{multline*}
\Ex \left[ \{ \Z_n^{(m)}(s) - \Z_n^{(m)}(t) \}^4 \right]  
 \le \frac{24 \Ex[\{\xi_{0,n}^{(m)}\}^4]\|Y_0\| _{4+\delta}^4 \lambda_n(s,t)}{n} \\ \times \left\{ 100  \sum_{j=0}^{\ip{nt}-\ip{ns}-1} \sum_{i,k \le j} \alpha_i^{(2+\delta)/(4+\delta)} \alpha_k^{(2+\delta)/(4+\delta)} + 30 \sum_{i=0}^{\ip{nt}-\ip{ns}-1} \sum_{j,k\le i} \alpha_i^{\delta/(4+\delta)}  \right\}.
\end{multline*}
From the condition on the mixing coefficients, there exists a constant $C > 0$ such that
$$
 \sum_{i=0}^{\ip{nt}-\ip{ns}-1} \sum_{j,k\le i} \alpha_i^{\delta/(4+\delta)} \leq \sum_{i=0}^{\ip{nt}-\ip{ns}-1} i^2 \alpha_i^{\delta/(4+\delta)} \leq C \sum_{i=1}^{\ip{nt}-\ip{ns}} i^{2-a},
$$
where $a = b \delta / (4+\delta) > 2$. From well-known results on Riemann series, if $-1 < 2-a < 0$, the sum on the right of the previous inequality is $O((\ip{nt}-\ip{ns})^{3-a})$. Hence, $\sum_{i=0}^{\ip{nt}-\ip{ns}-1} \sum_{j,k\le i} \alpha_i^{\delta/(4+\delta)} = O((\ip{nt}-\ip{ns})^\tau)$, where $\tau = \max(3-a,0) < 1$. Also, since $a > 2$, $\sum_{i=1}^\infty \alpha_i^{(2+\delta)/(4+\delta)} < \infty$, and therefore
$$
\Ex \left[ \{ \Z_n^{(m)}(s) - \Z_n^{(m)}(t) \}^4 \right]  \leq K \lambda_{n}(s,t) [ \lambda_{n}(s,t)  + n^{\tau-1} \{\lambda_{n}(s,t)\}^\tau],
$$
where $K > 0$ is a constant depending on the mixing coefficients, $\Ex[\{\xi_{0,n}^{(m)}\}^4]$ and $\|Y_0\| _{4+\delta}^4$. The latter inequality implies that
$$
\Ex \left[ \{ \Z_n^{(m)}(s) - \Z_n^{(m)}(t) \}^4 \right] \leq K \{ \lambda_{n}(s,t) \}^2 [ 1 + n^{\tau-1} \{ \lambda_{n}(s,t) \}^{\tau-1}] \leq K \{ \lambda_{n}(s,t) \}^2 (1 + 1),
$$
where we have used the fact that $1/n \leq \lambda_{n}(s,t)$ and that $\tau-1 <  0$.
\end{proof}

\begin{proof}[\bf Proof of Proposition~\ref{prop:func_mult}] 
Weak convergence of the finite-dimension\-al distributions is established in Lemma~\ref{lem:fidi}. Asymptotic tightness of $\Z_n$ is a consequence of the weak convergence of $\Z_n$ to $\Z$ in $\ell^\infty([0,1])$ proved in \citet[Theorem~2]{OodYos72}. Fix $m \in \{1,\dots,M\}$. To show asymptotic tightness of $\Z_n^{(m)}$, we shall first prove that $\Z_n^{(m)}$ is asymptotically uniformly equicontinuous in probability using Lemma~\ref{lem:moments4} together with Lemma~2 of \cite{BalDup80} \citep[see also][Theorem 3 and the remarks on page 1665]{BicWic71}.

Let $T_n = \{i/n : i = 0,\dots,n \}$ and let $(s,t]$ be a non-empty set of $[0,1]$ whose boundary points lie in $T_n$. Also, let $\mu$ be the Lebesgue measure on $[0,1]$. By Markov's inequality and Lemma~\ref{lem:moments4}, we then have that, for any $\eps > 0$,
\begin{multline}
\label{eq:prob_bound}
\Pr( |\Z_n^{(m)}(s) - \Z_n^{(m)}(t)| \geq \eps) \leq \eps^{-4}  \Ex \left[ \{ \Z_n^{(m)}(s) - \Z_n^{(m)}(t) \}^4 \right]  \\ \leq \eps^{-4} \kappa \{ \lambda_{n}(s,t) \}^2 = \eps^{-4} \kappa \{ \mu((s,t]) \}^2.
\end{multline}
Let $\tilde \mu_n$ denote a finite measure on $T_n$ defined from its values on the singletons $\{s\}$ of $T_n$ as
$$
\tilde \mu_n(\{s\}) = \begin{cases} 0 & \text{ if } s  = 0, \\
 \mu( (s', s] ) &\text{ otherwise,}
	 \end{cases}
$$
where  $s'=\max\{ t \in T_n : t < s\}$. By additivity of $\tilde \mu_n$,~\eqref{eq:prob_bound} can be rewritten as
$$
\Pr( |\Z_n^{(m)}(s) - \Z_n^{(m)}(t)| \geq \eps) \leq \eps^{-4} \kappa \{ \tilde \mu_n((s,t] \cap T_n) \}^2.
$$
Next, consider a positive sequence $\delta_n \downarrow 0$, and let $\delta_n' \downarrow 0$ such that, for any $n \in \N$, $\delta_n' \in \{1/i : i \in \N\}$ and $\delta_n' \ge \max(\delta_n, 1/n)$. Then, for any $\eps > 0$,
$$
\Pr \left\{ \sup_{s,t \in [0,1] \atop |s - t| \leq \delta_n} |\Z_n^{(m)}(s) -\Z_n^{(m)}(t)| > \eps \right\} \leq \Pr \left\{ \sup_{s,t \in [0,1] \atop |s - t| \leq \delta_n'} |\Z_n^{(m)}(s) -\Z_n^{(m)}(t)| > \eps \right\}.
$$
Applying Lemma 2 of \cite{BalDup80}, we obtain that there exists a constant $C > 0$ depending on $\eps$ such that the probability on the right of the previous display is smaller than
$$
C \tilde \mu_n(T_n) \times  \sup_{s,t \in T_n \atop |s - t| \leq 3 \delta_n'} | \tilde \mu_n( \{0,...,s\}) - \tilde \mu_n(\{0,...,t\}) |  \leq C 3\delta_n' \to 0.
$$
Hence, $\Z_n^{(m)}$ is asymptotically uniformly equicontinuous in probability and therefore asymptotically tight. 
The proof is complete as marginal asymptotic tightness implies joint asymptotic tightness.
\end{proof}

\section{Proof of Proposition~\ref{prop:multUn}}
\label{proof:prop:multUn}

\begin{proof}[\bf Proof of Proposition~\ref{prop:multUn}] 
For any $m \in \{1,\dots,M\}$, let
$$
\tilde \U_n^{(m)} (s) = \frac{2}{\sqrt{n}} \sum_{i=1}^{\ip{ns}} \xi_{i,n}^{(m)} \tilde h_{1,1:\ip{ns}} (\vec X_i), \qquad s \in [0,1],
$$
where, for $1\le k \le l \le n$,
\begin{equation}
\label{eq:tildeh1kl}
\tilde h_{1,k:l}(\vec X_i) = \frac{1}{l-k} \sum_{j=k \atop j \neq i}^l h(\vec X_i, \vec X_j) - \theta, \qquad i \in \{k,\dots,l\},
\end{equation}
with the convention that $\tilde h_{1,k:l} = 0$ if $k=l$. Fix $m \in \{1,\dots,M\}$ and let us first show~\eqref{eq:asym_equiv_checkUnm}. Proceeding as for the term (B.8) in \cite{BucKojRohSeg14}, it can be verified that
$$
\sup_{s \in [0,1]} | \check \U_n^{(m)} (s) - \tilde \U_n^{(m)} (s) | = \sup_{s \in [0,1]} \left| \U_n(s) \times \frac{2}{n} \sum_{i=1}^{\ip{ns}}  \xi_{i,n}^{(m)} \right| = o_\Pr(1).
$$
Hence, to show~\eqref{eq:asym_equiv_checkUnm}, it is enough to prove that
\begin{equation}
\label{eq:asym_equiv_tildeUnm}
\sup_{s \in [0,1]} \left| \tilde \U_n^{(m)}(s) - \frac{2}{\sqrt{n}} \sum_{i=1}^{\ip{ns}} \xi_{i,n}^{(m)} h_1(\vec X_i) \right| = o_\Pr(1).
\end{equation}
It is easy to verify that the result holds if the above supremum is restricted to $s \in [0,2/n)$. For any $s \in [2/n,1]$, using~\eqref{eq:h2}, we obtain 
\begin{align*}
\tilde \U_n^{(m)} (s) 
&= 
\frac{2}{\sqrt n (\ip{ns}-1) } \sum_{i,j=1 \atop i \neq j}^{\ip{ns}}  \xi_{i,n}^{(m)} \{ h_1(\vec X_i) + h_1(\vec X_j) + h_2(\vec X_i, \vec X_j) \}  \\
&= \frac{2}{\sqrt{n}}  \sum_{i=1}^{\ip{ns}}  \xi_{i,n}^{(m)} h_1(\vec X_i) + \V_n^{(m)}(s) + \W_n^{(m)}(s),
\end{align*}
where 
\begin{align}
\label{eq:Vnm}
\V_n^{(m)}(s) &= \frac{2}{\sqrt n(\ip{ns}-1) } \sum_{i,j=1 \atop i \neq j}^{\ip{ns}}  \xi_{i,n}^{(m)} h_1(\vec X_j) , \\
\label{eq:Wnm}
\W_n^{(m)}(s) &= \frac{2}{\sqrt n(\ip{ns}-1) }  \sum_{i,j=1 \atop i \neq j}^{\ip{ns}}  \xi_{i,n}^{(m)} h_2(\vec X_i, \vec X_j) .
\end{align}
To prove~\eqref{eq:asym_equiv_tildeUnm}, it remains therefore to show that both $\sup_{s\in[2/n,1]} | \V_n^{(m)}(s) | = o_\Pr(1)$ and $\sup_{s\in[2/n,1]} | \W_n^{(m)}(s) | = o_\Pr(1)$. First, for any $s\in [2/n,1]$, we write $\V_n^{(m)}(s) = \V_{n,1}^{(m)}(s) - \V_{n,2}^{(m)}(s)$, where
\begin{align*}
\V_{n,1}^{(m)}(s) 
&= 
\frac{1}{\ip{ns}-1}\sum_{i=1}^{\ip{ns}} \xi_{i,n}^{(m)} \times \frac{2}{\sqrt n}\sum_{j=1}^{\ip{ns}} h_1(\vec X_j), \\
\V_{n,2}^{(m)}(s) 
&=
\frac{1}{(\ip{ns}-1) } \times \frac{2}{\sqrt n} \sum_{i=1}^{\ip{ns}} \xi_{i,n}^{(m)} h_1(\vec X_i).
\end{align*}

Using the method of proof used for the term (B.8) in \cite{BucKojRohSeg14}, it can be verified that $\sup_{s\in[2/n,1]} |\V_{n,1}^{(m)}(s) | =o_\Pr(1)$. Furthermore, using the fact that $s \mapsto n^{-1/2} \sum_{i=1}^{\ip{ns}} \xi_{i,n}^{(m)} h_1(\vec X_i) \in \ell^\infty([0,1])$ is asymptotically equicontinuous in probability as a consequence of Proposition~\ref{prop:func_mult}, we have that 
$$
\sup_{s\in [2/n, n^{-1/2}]} | \V_{n,2}^{(m)}(s)  | \leq 2 \sup_{s \in [0,1] \atop s \leq n^{-1/2}} \left| \frac{1}{\sqrt{n}} \sum_{i=1}^{\ip{ns}} \xi_{i,n}^{(m)} h_1(\vec X_i)\right| = o_\Pr(1).
$$ 
Moreover, using again the weak convergence of $s \mapsto n^{-1/2} \sum_{i=1}^{\ip{ns}} \xi_{i,n}^{(m)} h_1(\vec X_i)$ in $\ell^\infty([0,1])$,
\[
\sup_{s \in [n^{-1/2}, 1] } \left| \V_{n,2}^{(m)}(s) \right |  
\le 
\frac{1}{\sqrt n -2} \times \sup_{s \in [0,1] } \left | \frac{2}{\sqrt n} \sum_{i=1}^{\ip{ns}} \xi_{i,n}^{(m)} h_1(\vec X_i) \right|  =O_\Pr(n^{-1/2}) =o_\Pr(1).
\]
Altogether, $\V_n^{(m)}$ converges uniformly to zero in probability as it was to be shown.

The uniform convergence in probability to zero of $\W_n^{(m)}$ in~\eqref{eq:Wnm} follows from Lemma~\ref{lem:Wn_p_0} below. Indeed, under~(i), we have $\sum_{r=1} r \beta_r^{\delta/(2+\delta)} \le \mbox{const} \times \sum_{r=1}^n r^{1-a} = O(1)$, where $a = b \delta/(2+\delta) > b \delta/(4+\delta) > 2$. Under~(ii), from Lemma~4.5 of \cite{DehWen10b}, we know that since $h$ satisfies the $\Pr$-Lipschitz-continuity or the variation condition, so does $h_2$ and, in addition, $\sum_{r=1}^n r \alpha_r^{\delta/(2+\delta)} \leq \mbox{const} \times \sum_{r=1}^n r^{1-a} = O(1)$ with $a = b \min\{\delta/(2+\delta), 2\gamma\delta/(3\gamma\delta+\delta+5\gamma+2)\} > 2$. This completes the proof of~\eqref{eq:asym_equiv_checkUnm}. 

Let us now prove~\eqref{eq:asym_equiv_hatUnm}. Using~\eqref{eq:hath1kl} and then~\eqref{eq:h2}, we obtain
\begin{multline*}
\sup_{s \in [0,1]} \left| \hat \U_n^{(m)}(s) - \frac{2}{\sqrt{n}} \sum_{i=1}^{\ip{ns}} \xi_{i,n}^{(m)} h_1(\vec X_i) \right| \leq \sup_{s \in [0,1]} \left| \frac{2}{\sqrt{n} (n-1)} \sum_{i=1}^{\ip{ns}} \xi_{i,n}^{(m)} \sum_{j=1 \atop j \neq i}^n h_1(\vec X_j) \right| \\ 
+ \sup_{s \in [0,1]} \left| \frac{2}{\sqrt{n} (n-1)} \sum_{i=1}^{\ip{ns}} \xi_{i,n}^{(m)} \sum_{j=1 \atop j \neq i}^n h_2(\vec X_i, \vec X_j) \right| + | \U_n(1) | \times \sup_{s \in [0,1]} \left| \frac{2}{n} \sum_{i=1}^{\ip{ns}}  \xi_{i,n}^{(m)} \right|.
\end{multline*}
The third term on the right of the previous inequality converges in probability to zero since $\U_n(1)$ converges weakly as a consequence of Proposition~\ref{prop:weakUn} and $\sup_{s \in [0,1]} \left| 2n^{-1} \sum_{i=1}^{\ip{ns}}  \xi_{i,n}^{(m)} \right| = o_\Pr(1)$, which can be shown by proceeding for instance as for the term $K_n$ in the proof of Lemma D.2 of \cite{BucKoj14}. To show that the first supremum on the right is $o_\Pr(1)$, one can use a decomposition similar to that used for~\eqref{eq:Vnm} and proceed along the same lines. The second supremum can be rewritten as $\sup_{s \in [0,1]} | \K_n^{(m)}(s)  + \Lb_n^{(m)}(s) |$, where, for any $s \in [0,1]$, 
\begin{align*}
\K_n^{(m)}(s) &= \frac{2}{\sqrt{n} (n-1)} \sum_{i,j=1 \atop i \neq j}^{\ip{ns}} \xi_{i,n}^{(m)}  h_2(\vec X_i, \vec X_j), \\ 
\Lb_n^{(m)}(s) &= \frac{2}{\sqrt{n} (n-1)} \sum_{i=1}^{\ip{ns}} \xi_{i,n}^{(m)}  \sum_{j=\ip{ns}+1}^n  h_2(\vec X_i, \vec X_j).
\end{align*}
The proof of the convergence in probability to zero of $\sup_{s \in [0,1]} | \K_n^{(m)}(s)|$ is very similar to that of Lemma~\ref{lem:Wn_p_0}. The same arguments can be adapted to show that $\sup_{s \in [0,1]} | \Lb_n^{(m)}(s)| = o_\Pr(1)$ by proving, in particular, appropriate versions of Lemmas~5.2 and 5.3 of \cite{DehFriGarWen15}.

Finally, from Proposition~\ref{prop:func_mult}, we have that
$$
\left( s \mapsto 2n^{-1/2} \sum_{i=1}^{\ip{ns}} h_1(\vec X_i), s \mapsto 2n^{-1/2} \sum_{i=1}^{\ip{ns}} \xi_{i,n}^{(1)} h_1(\vec X_i), \dots, s \mapsto 2n^{-1/2} \sum_{i=1}^{\ip{ns}} \xi_{i,n}^{(M)} h_1(\vec X_i), \right)
$$
converges weakly to $(\U,\U^{(1)},\dots,\U^{(M)})$ in $\{ \ell^\infty([0,1]) \}^2$. The last claim of the proposition then follows from the continuous mapping theorem and~\eqref{eq:asym_equiv_Un},~\eqref{eq:asym_equiv_hatUnm} and~\eqref{eq:asym_equiv_checkUnm}.
\end{proof}

\begin{lem}
\label{lem:Wn_p_0}
Assume that $\vec X_1,\dots,\vec X_n$ is drawn from a strictly stationary sequence $(\vec X_i)_{i \in \N}$ and that $h_2$ has uniform $(2+\delta)$-moments for some $\delta > 0$. Also, for any $m \in \{1,\dots,M\}$, let $(\xi_{i,n}^{(m)})_{i \in \N}$ be a sequence satisfying~(M1) in Definition~\ref{defn:mult_seq}. Furthermore, suppose that there exists $\tau \in [0,1)$ such that one of the following two conditions holds:
\begin{enumerate}[(i)]
\item $(\vec X_i)_{i \in \N}$ is absolutely regular and $\sum_{r=0}^n r \beta_r^{\delta/(2+\delta)} = O(n^\tau)$,
\item $(\vec X_i)_{i \in \N}$ is strongly mixing, $\Ex ( \| \vec X_1 \|^\gamma ) < \infty$ for some $\gamma > 0$, $h_2$ satisfies the $\Pr$-Lipschitz continuity or variation condition and $\sum_{r=0}^n r \alpha_r^{2\gamma\delta/(3\gamma\delta + \delta + 5\gamma + 2)} = O(n^\tau)$.
\end{enumerate}
Then, for any $m \in \{1,\dots,M\}$, $\sup_{s\in[2/n,1]} | \W_n^{(m)}(s) | \p 0$, where $\W_n^{(m)} $ is defined in~\eqref{eq:Wnm}.
\end{lem}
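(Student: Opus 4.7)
The plan is to exploit three features of the setup: the independence of the multiplier sequence from the data; the $\ell_n$-dependence of the multipliers, so that $\Ex(\xi_{i,n}^{(m)} \xi_{j,n}^{(m)}) = \varphi((i-j)/\ell_n)$ is supported on $|i-j| \leq \ell_n$; and covariance bounds for the degenerate kernel $h_2$ available under either (i) or (ii). Since $s \mapsto \W_n^{(m)}(s)$ is constant on each interval $[k/n,(k+1)/n)$, the supremum reduces to a discrete maximum,
\begin{equation*}
\sup_{s \in [2/n,1]} |\W_n^{(m)}(s)| = \max_{2 \leq k \leq n} |W_k|, \qquad
W_k = \frac{2}{\sqrt n\,(k-1)} \sum_{i \neq j,\; i,j \leq k} \xi_{i,n}^{(m)} h_2(\vec X_i,\vec X_j),
\end{equation*}
so it suffices to show $\max_{2 \leq k \leq n} |W_k| = o_\Pr(1)$.

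I would first establish a fourth-moment bound for each $W_k$. Expanding and using the independence of multipliers and data,
\begin{equation*}
\Ex(W_k^4) = \frac{16}{n^2(k-1)^4} \sum \Ex\Big[\prod_{l=1}^4 \xi_{i_l,n}^{(m)}\Big]\,\Ex\Big[\prod_{l=1}^4 h_2(\vec X_{i_l},\vec X_{j_l})\Big],
\end{equation*}
where the sum is over the eight indices $(i_l,j_l)_{l=1}^4$ with $i_l, j_l \leq k$ and $i_l \neq j_l$. By (M1)--(M3), the multiplier factor vanishes unless the four indices $i_1,\dots,i_4$ partition into clusters each of size at least two and diameter at most $\ell_n$; this restricts the $(i_l)$-configurations to at most $O(k^2 \ell_n^2)$ in number. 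For the data factor, I would combine the degeneracy of $h_2$ with classical covariance inequalities for mixing sequences in the spirit of Lemma~3.11 of Dehling and Philipp (2002), invoking under (ii) Lemma~4.5 of \cite{DehWen10b} so that $h_2$ inherits the $\Pr$-Lipschitz or variation condition from $h$. The remaining summation over the $(j_l)$-indices is then dealt with by partitioning the eightfold sum according to the largest inter-index gap, in the spirit of Lemma~3.22 of Dehling and Philipp (2002), which already underlies Lemma~\ref{lem:moments4}. The assumed rate $\sum_r r \beta_r^{\delta/(2+\delta)} = O(n^\tau)$ with $\tau < 1$, or its strong-mixing analogue, is precisely what is needed so that these sums yield a bound of the form $\Ex(W_k^4) = o(1/n)$ uniformly in $k$.

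Markov's inequality together with a union bound then gives, for any $\varepsilon > 0$,
\begin{equation*}
\Pr\Big(\max_{2 \leq k \leq n} |W_k| > \varepsilon\Big) \leq \varepsilon^{-4} \sum_{k=2}^n \Ex(W_k^4) = o(1),
\end{equation*}
which yields the lemma.

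The principal obstacle will be the combinatorial bookkeeping of the eightfold sum: the configurations must be partitioned according to the relative positions of $i_1,j_1,\dots,i_4,j_4$, and in each case one must apportion the available mixing decay between the $\ell_n$-clustering constraint on the $\xi$-indices and the decay of the degenerate kernel's cross-moments. This essentially amplifies the calculation underlying Lemma~\ref{lem:moments4}, complicated further by the presence of the degenerate kernel and by the need to stay uniform in $k$ under both the absolute regularity and the strong mixing hypotheses.
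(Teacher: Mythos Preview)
Your proposal relies on two ingredients that are not available under the stated hypotheses, so the argument has genuine gaps.

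First, you invoke (M2)--(M3), in particular the $\ell_n$-dependence and the kernel representation $\Ex(\xi_{i,n}^{(m)}\xi_{j,n}^{(m)})=\varphi((i-j)/\ell_n)$, to force a clustering constraint on the indices $i_1,\dots,i_4$. But the lemma only assumes (M1). Under (M1) alone, all you get is $\lvert\Ex\bigl[\prod_l \xi_{i_l,n}^{(m)}\bigr]\rvert \le \Ex\bigl[(\xi_{0,n}^{(m)})^4\bigr]<\infty$; the multiplier factor is bounded but never vanishes, so the $O(k^2\ell_n^2)$ reduction you rely on collapses and you are left with the full $k^4$ configurations of $(i_l)$.

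Second, and more seriously, the fourth-moment route forces you to control $\Ex\bigl[\prod_{l=1}^4 h_2(\vec X_{i_l},\vec X_{j_l})\bigr]$, an eight-index cross-moment of the degenerate kernel. The lemma only assumes uniform $(2+\delta)$-moments for $h_2$, and the key tool here, Lemma~4.4 of \cite{DehWen10b}, delivers bounds of the form
\[
\sum_{i_1,j_1,i_2,j_2=1}^k \bigl|\Ex\{h_2(\vec X_{i_1},\vec X_{j_1})h_2(\vec X_{i_2},\vec X_{j_2})\}\bigr| = O(k^{2+\tau}),
\]
i.e., for products of \emph{two} $h_2$-factors. There is no analogue for four factors available under the stated moment conditions, and producing one in the spirit of Lemma~3.22 of \cite{DehPhi02} would require at least uniform $(4+\delta)$-moments for $h_2$ together with a substantially more elaborate case analysis. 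Without such a result, the claimed bound $\Ex(W_k^4)=o(1/n)$ is unsupported.

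The paper's proof avoids both issues by working with second moments throughout: one has $\lvert\Ex(\xi_{i_1,n}^{(m)}\xi_{i_2,n}^{(m)})\rvert\le 1$ from (M1) alone via Cauchy--Schwarz, and then Lemma~4.4 of \cite{DehWen10b} applies directly to give $\Ex\{\W_n^{(m)}(k/n)^2\}=O(k^\tau/n)$. A crude union bound on second moments would only handle $O(n^{1-\tau})$ values of $k$, so the range is split at $k\asymp n^{1/2}$: for small $k$ the union bound suffices, while for large $k$ one first establishes a second-moment bound on the increments $\W_n^{(m)}(t)-\W_n^{(m)}(s)$ (this is Lemma~\ref{lem:increments_Wn}) and then applies the maximal inequality of Theorem~10.2 in \cite{Bil99} to convert increment control into control of the maximum.
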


\begin{proof}
Fix $m \in \{1,\dots,M\}$. We shall first show that $\sup_{s\in[2/n,n^{-1/2}]} | \W_n^{(m)}(s) | = o_\Pr(1)$. Clearly,
$$
\sup_{s\in[2/n,n^{-1/2}]} | \W_n^{(m)}(s) | = \max_{2 \leq k \leq n^{1/2}} | \W_n^{(m)}(k/n) |.
$$
Furthermore,
$$
\Ex \left[ \left\{ \max_{2 \leq k \leq n^{1/2}} | \W_n^{(m)}(k/n) | \right\}^2 \right] = \Ex \left[ \max_{2 \leq k \leq n^{1/2}} \{ \W_n^{(m)}(k/n) \} ^2 \right] \leq \sum_{2 \leq k \leq n^{1/2}} \Ex [ \{ \W_n^{(m)}(k/n) \}^2 ].
$$
Using the fact that $\Ex(\xi_{i_1,n}^{(m)}\xi_{i_2,n}^{(m)}) \leq \Ex\{ (\xi_{i_1,n}^{(m)})^2 \} = 1$ and Lemma 4.4 in \cite{DehWen10b}, we obtain
\begin{align*}
\Ex [ \{ \W_n^{(m)}(k/n) \}^2 ] &= \frac{4}{n (k-1)^2} \sum_{i_1,j_1,i_2,j_2=1 \atop i_1 \neq j_1, i_2 \neq j_2}^k \Ex(\xi_{i_1,n}^{(m)}\xi_{i_2,n}^{(m)}) \Ex\{ h_2(\vec X_{i_1}, \vec X_{j_1})  h_2(\vec X_{i_2}, \vec X_{j_2}) \} \\
&\leq \frac{4}{n (k-1)^2} \sum_{i_1,j_1,i_2,j_2=1}^k | \Ex\{ h_2(\vec X_{i_1}, \vec X_{j_1})  h_2(\vec X_{i_2}, \vec X_{j_2}) \} | = O(k^\tau/n).
\end{align*}
It follows that 
\begin{equation}
\label{eq:first_max}
\Ex \left[ \left\{ \max_{2 \leq k \leq n^{1/2}} | \W_n^{(m)}(k/n) | \right\}^2 \right] = O(n^{(\tau-1)/2}) \to 0.
\end{equation}

It remains to show that $\sup_{s\in[n^{-1/2},1]} | \W_n^{(m)}(s) | = o_\Pr(1)$. To do so, we use Lemma~\ref{lem:increments_Wn} below. Using Markov's inequality, for any $n^{-1/2} \leq s < t \leq 1$ such that $t-s \geq n^{-1}$ and any $\lambda > 0$, we have
$$
\Pr\{ | \W_n^{(m)}(s) - \W_n^{(m)}(t) | \geq \lambda \} \leq \lambda^{-2} \Ex[ \{\W_n^{(m)}(s) - \W_n^{(m)}(t)\}^2 ] \leq \lambda^{-2} K (t-s) n^{(\tau-1)/2},
$$
which, as $\W_n^{(m)}(s) = \W_n^{(m)}(\ip{ns}/n)$ for all $s \in [2/n,1]$, implies that
\begin{align*}
\Pr\{ | \W_n^{(m)}(\ip{ns}/n) - \W_n^{(m)}(\ip{nt}/n) | \geq \lambda \} &\leq \lambda^{-2} K (nt-ns) n^{(\tau-3)/2} \\ &\leq \lambda^{-2} K 2(\ip{nt}-\ip{ns}) n^{(\tau-3)/2}.
\end{align*}
Setting $\kappa=2K$, it follows that, for any integers $\ip{n^{1/2}}\leq k \leq l \leq n$, 
\begin{equation}
\label{eq:tightness_eq}
\Pr\{ | \W_n^{(m)}(k/n) - \W_n^{(m)}(l/n) | \geq \lambda \} \leq \lambda^{-2} \kappa (l-k) n^{(\tau-3)/2} \leq\lambda^{-2} \kappa (l-k)^{1 + \epsilon} n^{(\tau-3)/2}
\end{equation}
for some $0 < \epsilon < (1-\tau)/2$. Then, for any $i \in \{1,\dots,n - \ip{n^{1/2}}\}$, set
$$
\zeta_i = \W_n^{(m)}\{ (i + \ip{n^{1/2}})/n \} - \W_n^{(m)}\{ (i - 1  + \ip{n^{1/2}})/n \}.
$$
Also, let $S_0 = 0$ and, for any $i \in \{1,\dots,n - \ip{n^{1/2}}\}$, let $S_i = \sum_{j=1}^i \zeta_j$. Thus, $S_i = \W_n^{(m)}\{ (i + \ip{n^{1/2}})/n \} - \W_n^{(m)}(\ip{n^{1/2}})/n)$ and, from~\eqref{eq:tightness_eq}, we have that, for any integers $0 \leq k \leq l \leq n - \ip{n^{1/2}}$,
$$
\Pr\{ | S_l - S_k | \geq \lambda \} \leq \lambda^{-2} [\kappa^{1/(1+\epsilon)} n^{(\tau-3)/\{2(1+\epsilon)\} } (l-k) ]^{1 + \epsilon}.
$$
We can then apply Theorem~10.2 of \cite{Bil99} with $\alpha=(1+\epsilon)/2$, $\beta=1/2$ and $u_l =  \kappa^{1/(1+\epsilon)} n^{(\tau-3)/\{ 2(1+\epsilon) \} }$, $l \in \{1,\dots,n - \ip{n^{1/2}}\}$. Hence, there exists a constant $\kappa' > 0$ depending only $\epsilon$ such that, for any $\lambda > 0$, 
\begin{align*}
\Pr \left( \max_{1 \leq k \leq n - \ip{n^{1/2}}} |S_k| \geq \lambda \right) &\leq \kappa' \lambda^{-2} \left\{ \kappa^{1/(1+\epsilon)} n^{(\tau-3)/\{ 2(1+\epsilon) \} } (n - \ip{n^{1/2}}) \right\}^{1+\epsilon} \\
&\leq \kappa' \lambda^{-2} \kappa n^{(\tau-3)/2 + 1 + \epsilon} \to 0.
\end{align*}
The desired result finally follows from~\eqref{eq:first_max} and the fact that
$$
\max_{\ip{n^{1/2}}+1 \leq k \leq n} |\W_n^{(m)}(k/n)| \leq \max_{1 \leq k \leq n - \ip{n^{1/2}}} |S_k| + |\W_n^{(m)}(\ip{n^{1/2}}/n)|.
$$
\end{proof}

\begin{lem}
\label{lem:increments_Wn}
Under the conditions of Lemma~\ref{lem:Wn_p_0}, there exists a constant $K >0$ such that, for any $m \in \{1,\dots,M\}$ and any $n^{-1/2} \leq s < t \leq 1$ such that $t-s \geq n^{-1}$,
$$
\Ex[ \{\W_n^{(m)}(s) - \W_n^{(m)}(t)\}^2 ] \leq K (t-s) n^{(\tau-1)/2}.
$$
\end{lem}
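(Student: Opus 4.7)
Set $k=\ip{ns}$, $l=\ip{nt}$ and
$T_p = \sum_{i,j=1,\,i\ne j}^p \xi_{i,n}^{(m)} h_2(\vec X_i, \vec X_j)$,
so that $\W_n^{(m)}(s) = 2T_k / [\sqrt n\,(k-1)]$. A short algebraic manipulation gives the identity
$$
\W_n^{(m)}(t) - \W_n^{(m)}(s) \;=\; \frac{2\,(T_l - T_k)}{\sqrt n\,(l-1)} \;-\; \frac{2(l-k)\,T_k}{\sqrt n\,(l-1)(k-1)},
$$
and $(a+b)^2 \le 2a^2 + 2b^2$ reduces the problem to bounding the second moment of each summand by a constant times $(l-k)n^{(\tau-3)/2}$, which is the desired bound since $(l-k)/n \le 2(t-s)$.

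For the normalization-correction term, the estimate $\Ex[T_k^2] \le C k^{2+\tau}$, which is precisely the bound already used in the proof of Lemma~\ref{lem:Wn_p_0} and comes from Lemma~4.4 of \cite{DehWen10b}, produces a contribution of order $(l-k)^2 k^\tau/[n(l-1)^2]$. Using $k,l \ge \ip{\sqrt n}$ and distinguishing the cases $l-k \le k$ and $l-k > k$ (so that either $(l-k)^2 \le (l-k)k$ or $(l-k)^2 \le (l-k)l$), this is readily seen to be at most $C'(l-k)\, n^{(\tau-3)/2}$.

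For the new-pairs term, decompose $T_l - T_k = U_1 + U_2 + U_3$, where $U_3$ collects the pairs with both indices in the new window $(k,l]$ and $U_1,U_2$ collect the mixed pairs (one index in $[1,k]$, the other in $(k,l]$). The term $U_3$ has the same structure as $T_p$ with $p = l-k$, so a direct application of Lemma~4.4 of \cite{DehWen10b} to the shifted stationary window yields $\Ex[U_3^2] \le C(l-k)^{2+\tau}$; together with $l-1 \ge c\sqrt n$ this gives a contribution bounded by $C''(l-k)\, n^{(\tau-3)/2}$. The main obstacle is the mixed pieces $U_1, U_2$: these require a \emph{rectangular} extension of Lemma~4.4, i.e., a bound on
$$
\sum_{i_1,i_2 \le k,\; j_1, j_2 \in (k,l]} \bigl| \Ex\{h_2(\vec X_{i_1}, \vec X_{j_1}) h_2(\vec X_{i_2}, \vec X_{j_2})\} \bigr|
$$
of the form $C\bigl(k(l-k)^{1+\tau} + k^{1+\tau}(l-k)\bigr)$. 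This should be obtained by following the "largest-gap" argument of Dehling and Wendler's proof of Lemma~4.4: reorder the four indices, split the sum according to which of the three gaps is largest, and exploit the degeneracy $\Ex\{h_2(\vec x, \vec X_r)\} = 0$ together with either a direct absolute-regularity covariance bound (case~(i)) or the $\Pr$-Lipschitz/variation property applied to $h_2$ via Lemma~4.5 of \cite{DehWen10b} (case~(ii)), thereby producing the summable series $\sum_r r\beta_r^{\delta/(2+\delta)}$ or $\sum_r r\alpha_r^{2\gamma\delta/(3\gamma\delta+\delta+5\gamma+2)}$ already invoked in the hypotheses. Once the rectangular bound is available, $k \le l$ together with $l \ge \sqrt n$ gives $\Ex[U_i^2]/[n(l-1)^2] \lesssim (l-k)\, n^{(\tau-3)/2}$ for $i=1,2$, completing the proof.
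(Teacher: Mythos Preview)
Your proposal is correct and follows essentially the same route as the paper: the same algebraic split into a normalization-correction piece ($I_{n,3}$ in the paper, your second summand) and a new-pairs piece, the latter further decomposed into both-new and mixed contributions ($I_{n,1},I_{n,1}',I_{n,2},I_{n,2}'$ in the paper, your $U_3,U_1,U_2$), with Lemma~4.4 of \cite{DehWen10b} handling the square contributions and a rectangular covariance bound handling the mixed pieces. The only cosmetic difference is that for the rectangular bound the paper simply cites Lemma~5.2 of \cite{DehFriGarWen15} together with Lemmas~4.1 and~4.2 of \cite{DehWen10b}, obtaining $O\bigl(k(l-k)l^\tau\bigr)$, whereas you propose to re-derive the equivalent bound $O\bigl(k(l-k)^{1+\tau}+k^{1+\tau}(l-k)\bigr)$ by the same largest-gap argument---either form suffices and the final arithmetic is identical.
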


\begin{proof}
Fix $n^{-1/2} \leq s<t \leq 1$ such that $t-s \geq n^{-1}$. Then, 
\begin{multline*}
n^{1/2} \{ \W_n^{(m)}(t) - \W_n^{(m)}(s) \} / 2 = \frac{1}{\ip{nt} - 1} \sum_{i,j=1 \atop i \neq j}^{\ip{nt}}  \xi_{i,n}^{(m)} h_2(\vec X_i, \vec X_j) - \frac{1}{\ip{nt} - 1} \sum_{i,j=1 \atop i \neq j}^{\ip{ns}}  \xi_{i,n}^{(m)} h_2(\vec X_i, \vec X_j)  \\ + \frac{1}{\ip{nt} - 1} \sum_{i,j=1 \atop i \neq j}^{\ip{ns}}  \xi_{i,n}^{(m)} h_2(\vec X_i, \vec X_j) - \frac{1}{\ip{ns} - 1} \sum_{i,j=1 \atop i \neq j}^{\ip{ns}}  \xi_{i,n}^{(m)} h_2(\vec X_i, \vec X_j),
\end{multline*}
that is, $n^{1/2} \{ \W_n^{(m)}(t) - \W_n^{(m)}(s) \} / 2 = I_{n,1} + I_{n,1}' + I_{n,2} + I_{n,2}' + I_{n,3}$, where
\begin{align*}
I_{n,1} &= \frac{1}{\ip{nt} - 1} \sum_{\ip{ns}+1 \leq i < j \leq \ip{nt}}  \xi_{i,n}^{(m)} h_2(\vec X_i, \vec X_j), \\
I_{n,1}' &= \frac{1}{\ip{nt} - 1} \sum_{\ip{ns}+1 \leq i < j \leq \ip{nt}}  \xi_{j,n}^{(m)} h_2(\vec X_i, \vec X_j), \\
I_{n,2} &= \frac{1}{\ip{nt} - 1} \sum_{1 \leq i \leq \ip{ns}  < j \leq \ip{nt}}  \xi_{i,n}^{(m)} h_2(\vec X_i, \vec X_j), \\
I_{n,2}' &= \frac{1}{\ip{nt} - 1} \sum_{1 \leq i \leq \ip{ns}  < j \leq \ip{nt}}  \xi_{j,n}^{(m)} h_2(\vec X_i, \vec X_j), \\
I_{n,3} &= \frac{\ip{ns} - \ip{nt}}{(\ip{ns} - 1)(\ip{nt} - 1)} \sum_{i,j=1 \atop i \neq j}^{\ip{ns}}  \xi_{i,n}^{(m)} h_2(\vec X_i, \vec X_j).
\end{align*}
Using the fact that, for any $x,y \in \R$, $(x+y)^2 \leq 2 (x^2 + y^2)$, we have that
\begin{equation}
\label{eq:increments}
\Ex[ n \{ \W_n^{(m)}(t) - \W_n^{(m)}(s) \}^2 / 4 ] \leq 8 \{ \Ex(I_{n,1}^2) + \Ex(I_{n,1}'^2) + \Ex(I_{n,2}^2) + \Ex(I_{n,2}'^2) + \Ex(I_{n,3}^2)\}.
\end{equation}
Now, 
\begin{multline*}
\Ex(I_{n,1}^2) 
\leq 
\frac{1}{(\ip{nt} - 1)^2} \sum_{\ip{ns}+1 \leq i_1 < j_1 \leq \ip{nt} \atop \ip{ns}+1 \leq i_2 < j_2 \leq \ip{nt}} |\Ex\{ h_2(\vec X_{i_1}, \vec X_{j_1}) h_2(\vec X_{i_2}, \vec X_{j_2}) \} |  \\
= 
O \left(\frac{(\ip{nt} - \ip{ns})^{2+\tau}}{(\ip{nt} - 1)^2} \right),
\end{multline*}
where we have used the fact that $\Ex(\xi_{i_1,n}^{(m)}\xi_{i_2,n}^{(m)}) \leq \Ex\{ (\xi_{i_1,n}^{(m)})^2 \} = 1$, Lemma 4.4 in \cite{DehWen10b}, and the stationarity of $(\vec X_i)_{i \in \N}$. Then, using the fact that $\ip{nt} - \ip{ns} \leq \ip{nt} - 1$, $n^{1/2} \leq nt$ and $t-s \geq n^{-1}$ , we obtain that 
$$
\frac{(\ip{nt} - \ip{ns})^{2+\tau}}{(\ip{nt} - 1)^2} \leq \frac{\ip{nt} - \ip{ns}}{(\ip{nt} - 1)^{1 - \tau}} \leq \frac{nt - ns +1}{(n^{1/2} - 2)^{1-\tau}} \leq \frac{n \times 2(t-s)}{(n^{1/2} - 2)^{1-\tau}},
$$
and therefore that $\Ex(I_{n,1}^2) = (t-s) \times O( n^{(\tau+1)/2} )$. Similarly, $\Ex(I_{n,1}'^2) = (t-s) \times O( n^{(\tau+1)/2} )$. Concerning $I_{n,2}$, we have
$$
\Ex(I_{n,2}^2) \leq \frac{1}{(\ip{nt} - 1)^2} \sum_{1 \leq i_1 \leq \ip{ns}  < j_1 \leq \ip{nt} \atop 1 \leq i_2 \leq \ip{ns}  < j_2 \leq \ip{nt}} |\Ex\{ h_2(\vec X_{i_1}, \vec X_{j_1}) h_2(\vec X_{i_2}, \vec X_{j_2}) \} |.
$$
Proceeding as in Lemma 5.2 of \cite{DehFriGarWen15}, and with the help of Lemmas 4.1 and 4.2 in \cite{DehWen10b}, we obtain
$$
\Ex(I_{n,2}^2) = O \left(\frac{\ip{ns} (\ip{nt} - \ip{ns}) \ip{nt}^\tau}{(\ip{nt} - 1)^2} \right) = O \left(\frac{\ip{nt} - \ip{ns}}{(\ip{nt} - 1)^{1 - \tau}} \right) = (t-s) \times O( n^{(\tau+1)/2} ),
$$
and similarly for $I_{n,2}'$ and $I_{n,3}$. The desired result finally follows from~\eqref{eq:increments}.
\end{proof}

\section{Proofs of Propositions~\ref{prop:weakDn} and~\ref{prop:multDn}}
\label{proofs:Dn}

\begin{proof}[\bf Proof of Proposition~\ref{prop:weakDn}]
A first step consists of showing that 
$$
\sup_{s \in [0,1]} | \U_n^*(s) -  \frac{2}{\sqrt{n}} \sum_{i=\ip{ns}+1}^n h_1(\vec X_i)| = o_\Pr(1).
$$ 
To do so, it suffices to prove that $\sup_{s \in [0,1-2/n]} | \U_n^*(s) -  2n^{-1/2} \sum_{i=\ip{ns}+1}^n h_1(\vec X_i)| = o_\Pr(1)$. Using~\eqref{eq:Hoeff_decomp}, this amounts to showing that $n^{-1/2} \max_{2 \leq k \leq n} k |U_{h_2,n-k+1:n}| = o_\Pr(1)$. The latter can be proved by adapting the arguments used in Lemmas~\ref{lem:Wn_p_0} and~\ref{lem:increments_Wn}. 

Using the above result and~\eqref{eq:asym_equiv_Un}, we obtain that 
\begin{equation}
\label{eq:asym_equiv_Dn}
\sup_{s\in [0,1]} \left| \D_n(s) - (1-s) \times \frac{2}{\sqrt{n}} \sum_{i=1}^{\ip{ns}} h_1(\vec X_i) + s \times \frac{2}{\sqrt{n}} \sum_{\ip{ns}+1}^n h_1(\vec X_i)\right| = o_\Pr(1),
\end{equation}
and the desired result follows from Theorem~2 of \cite{OodYos72} and the continuous mapping theorem.
\end{proof}

\begin{proof}[\bf Proof of Proposition~\ref{prop:multDn}] 
For any $m \in \{1,\dots,M\}$, let
$$
\tilde \U_n^{*,(m)} (s)  = \frac{2}{\sqrt{n}} \sum_{i=\ip{ns}+1}^{n} \xi_{i,n}^{(m)} \tilde h_{1,\ip{ns}+1:n} (\vec X_i), \qquad s \in [0,1],
$$
where $\tilde h_{1,k:l}(X_i)$ is defined in \eqref{eq:tildeh1kl}. Fix $m \in \{1,\dots,M\}$.  Adapting the arguments used in the proof of Proposition~\ref{prop:multUn}, it can be verified that $\check \U_n^{*,(m)}$, defined in \eqref{eq:checkUn*m}, is asymptotically equivalent to $\tilde \U_n^{*,(m)}$, and that $\sup_{s \in [0,1]} | \tilde \U_n^{*,(m)} (s) -  2n^{-1/2} \sum_{i=\ip{ns}+1}^n \xi_{i,n}^{(m)} h_1(\vec X_i)| = o_\Pr(1)$. Combined with~\eqref{eq:asym_equiv_tildeUnm}, we then obtain that
$$
\sup_{s\in [0,1]} \left| \check \D_n^{(m)}(s) - (1-s) \times \frac{2}{\sqrt{n}} \sum_{i=1}^{\ip{ns}} \xi_{i,n}^{(m)} h_1(\vec X_i) + s \times \frac{2}{\sqrt{n}} \sum_{\ip{ns}+1}^n \xi_{i,n}^{(m)} h_1(\vec X_i)\right| = o_\Pr(1),
$$
where $\check \D_n^{(m)}$ is defined in~\eqref{eq:checkDnm}. 
Similar arguments can be carried out to get the analogue display for $\hat \D_n^{(m)}$ in~\eqref{eq:hatDnm}. The desired result finally additionally follows from Proposition~\ref{prop:func_mult}, the continuous mapping theorem and~\eqref{eq:asym_equiv_Dn}.
\end{proof}

\section*{Acknowledgements}
This work has been supported in parts by the Collaborative Research Center ``Statistical modeling of nonlinear dynamic processes'' (SFB 823) of the German Research Foundation (DFG), which is gratefully acknowledged.

\bibliographystyle{plainnat}
\bibliography{biblio}

\end{document}